\newcommand{\vectorSequence}[1][n]{
\ifthenelse{\equal{#1}{0}}{\underline{\eta_{#1}}}{}
\ifthenelse{\equal{#1}{n}}{\underline{\eta_{n}}}{}
\ifthenelse{\equal{#1}{\underline{d}}}{\underline{\eta_{n,\textbf{d}}}}{}
}
\newcommand{\C}{\mathbb{C}}
\newcommand{\F}{\mathbb{F}}
\newcommand{\N}{\mathbb{N}}
\newcommand{\PP}{\mathbb{P}}
\newcommand{\Z}{\mathbb{Z}}
\newcommand{\e}{\mathrm{e}}
\newcommand{\Mod}[1]{\ (\text{mod}\ #1)}
\renewcommand{\Mod}[1]{{\ifmmode\text{\rm\ (mod~$#1$)}\else\discretionary{}{}{\hbox{ }}\rm(mod~$#1$)\fi}}
\newtheorem{theorem}{Theorem}
\newtheorem{lemma}{Lemma}
\begin{document}

\begin{frontmatter}[classification=text]


\author[cecile]{C\'ecile Dartyge}
\author[greg]{Greg Martin\thanks{Supported in part by a Natural Sciences and Engineering Research Council of Canada Discovery Grant.}}

\begin{abstract}
Hooley proved that if $f\in \Z [X]$ is irreducible of degree $\ge 2$, then the fractions $\{ r/n\}$, $0<r<n$ with $f(r)\equiv 0\Mod n$,
 are uniformly distributed in $]0,1[$. In this paper we study such problems for reducible polynomials of degree~$2$ and~$3$ and for finite products of linear factors. In particular, we establish asymptotic formulas for exponential sums over these normalized roots.
\end{abstract}
\end{frontmatter}


 \section{Introduction}
Let $f(X)$ be a polynomial of degree at least $2$ with integer coefficients, and let $h$ be a nonzero integer.
We consider for $x\ge1$ the exponential sums
\begin{equation}\label{defS}
S(f,x,h)=\sum_{n\le x}\sum_{\substack{{r\Mod n}\\{f(r)\equiv 0\Mod n}}}\e \Big (\frac{ hr}n\Big )
\end{equation}
with the standard notation $\e (t)=\exp (2i\pi t)$; our interest is in fixed~$f$ and~$h$ while $x$ tends to infinity.
In the case $h=1$ we will write simply $S(f,x)$.
Hooley~\cite[Theorem 1]{H64} proved that if $f$ is irreducible, then $S(f,x,h)=o(x)$ when $x\rightarrow\infty$.
 By Weyl's criterion, this implies that the fractions $r/n$, where $0<r<n$ and $f(r)\equiv 0\Mod n$, are uniformly distributed in $]0,1[$.
The condition that $\deg f\ge2$ is necessary in this result, as the roots of a linear polynomial are not equidistributed in $]0,1[$. For example, if $f(n)=n+b$, the exponential term in equation~\eqref{defS} is $\e (-hb/n)=1+O(1/n)$ for fixed~$h$ and~$b$, and then $S(f,x,h)= x+O(\log x)$. 
In general, when $f(n)=an+b$ one can similarly obtain a formula of the form $S(f,x,h)=xC(f,h)+o(x)$ for some constant $C(f,h)$.

When $f(n)=n^2-D$ with $D$ not a square, Hooley~\cite{H63} obtained (using a different method) the more precise
bound $S(f,x,h)\ll_{f,h} x^{3/4}\log x$. The exponent $3/4$ in this result was improved to $2/3$ via the theory of automorphic forms by Hejhal \cite{He78}, Bykovski \cite{B84}, and Zavorotny~\cite{Z84}.
A variant of this problem is the distribution of the fractions $r/p$, where $0<r<p$ and $f(r)\equiv 0\Mod p$, for primes~$p$. Duke, Friedlander, and Iwaniec~\cite{DFI95} proved that for $f(X)=aX^2+2bX+c$ with $ac-b^2>0$, we have
 \begin{equation*}
\sum_{p\le x}\sum_{\substack{{0\le r<p}\\{f(r)\equiv 0\Mod p}}}\e\Big (\frac{hr}{p}\Big )=o(\pi (x)).
 \end{equation*}
 
In all of the above results, the polynomial $f$ is assumed to be irreducible; what can we say about the sums $S(f,x,h)$ when $f$ is reducible? In this paper we examine reducible polynomials of degree~$2$ and $3$ (in which case $f$ has at least one linear factor) and on polynomials that factor completely into linear factors. The hope is to obtain, not just an upper bound, but an actual asymptotic formula for $S(f,x,h)$, analogously to the situation described above where $f$ is itself linear.

For example, Sitar and the second author~\cite{MS11} obtained a general bound for reducible quadratic polynomials~$f$
with discriminant~$D$:
\begin{equation}\label{MS}
S(f,x,h)\ll \sqrt{D}\prod_{p\mid h}\Big ( 1+\frac{7}{\sqrt{p}}\Big )x(\log x)^{\sqrt{2}-1}(\log\log x)^{5/2}.
\end{equation}
Since the number of summands of $S(f,x,h)$ has order $x\log x$ in this case, a consequence is that the the fractions $r/n$, where $0<r<n$ and $f(r)\equiv 0\Mod n$, are uniformly distributed in $]0,1[$ for reducible quadratic polynomials. However, this bound is still large enough to disguise a potential asymptotic main term of size~$x$ caused by the roots of the linear factors of~$f$.

In our first theorem, which is proved in Section~\ref{1+1 sec}, we provide such an asymptotic formula for exponential sums
with reducible quadratics:

\begin{theorem}\label{1+1}
Let $a,b,c,d$ be fixed integers with $ac\not =0$, $(a,b)=(c,d)=1$, and $ad\ne bc$, and set $f(n)=(an+b)(cn+d)$.
Then for any nonzero integer $h$, there exists $C(f,h)\in\C$ such that for any $\varepsilon>0$, we have
\begin{equation}\label{result}
S(f,x,h)=xC(f,h)+O(x^{4/5+\varepsilon}),
\end{equation}
where the implicit constant depends on $f$, $h$, and~$\varepsilon$.
When $(h,ad-bc)=1$, the error term in equation~\eqref{result} can be improved to $O(x^{3/4+\varepsilon})$.
\end{theorem}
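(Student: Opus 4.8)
The plan is to parametrize the pairs $(n,r)$ by the factorization of $n$ and then convert $S(f,x,h)$ into a two‑dimensional sum with explicit Kloosterman‑type phases. Set $\Delta=ad-bc$, which is nonzero by hypothesis. Since $c(an+b)-a(cn+d)=-\Delta$, any common divisor of the two linear factors divides $a\Delta$, so outside the fixed finite set of primes dividing $ac\Delta$ the factors are coprime. For a prime $p\nmid ac\Delta$ the congruence $f(r)\equiv 0\Mod{p^k}$ has exactly the two Hensel lifts $r\equiv -b\overline a$ and $r\equiv-d\overline c\Mod{p^k}$ (inverses mod $p^k$), because one factor is a unit whenever the other vanishes. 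Hence, up to a bounded local correction at the primes dividing $ac\Delta$, a root $r\bmod n$ corresponds to a coprime factorization $n=n_1n_2$ together with $r\equiv-b\overline a\Mod{n_1}$ and $r\equiv-d\overline c\Mod{n_2}$. I would first isolate the contribution of the bad primes into a finite local factor and carry out the main analysis for $n$ coprime to $ac\Delta$, reinstating the bad part afterward.

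Next I would feed the Chinese Remainder Theorem into the character: for $n=n_1n_2$ with $(n_1,n_2)=1$ one has $\e(hr/n)=\e(hr\overline{n_2}/n_1)\,\e(hr\overline{n_1}/n_2)$, and substituting the two congruence conditions for $r$ yields a phase of the shape $\e(-hb\,\overline{an_2}/n_1)\,\e(-hd\,\overline{cn_1}/n_2)$. Thus, modulo the bad‑prime bookkeeping,
\[
S(f,x,h)=\sum_{\substack{n_1n_2\le x\\ (n_1,n_2)=1}}\e\!\Big(\frac{-hb\,\overline{an_2}}{n_1}\Big)\,\e\!\Big(\frac{-hd\,\overline{cn_1}}{n_2}\Big)+(\text{bad-prime variants}),
\]
an incomplete two‑variable Kloosterman‑type sum over the hyperbolic region $n_1n_2\le x$. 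By the symmetry $n_1\leftrightarrow n_2$ I may assume $n_1\le\sqrt x$.

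To analyze the inner sum for fixed $n_1=q$ I would invoke the reciprocity law $\overline{n_1}/n_2\equiv-\overline{n_2}/n_1+1/(n_1n_2)\pmod 1$ (treating the fixed moduli $a,c$ via a bounded residue character). This collapses the two phases onto the common denominator $q$: up to a smooth factor $1+O(1/(qn_2))$ the summand becomes $\e\big(h\Delta\,\overline{ac}\,\overline{n_2}/q\big)$, so that $\Delta$ appears in the numerator. When $q\mid h\Delta$ the phase is trivial and the sum over $n_2\le x/q$ contributes a main term of size $x/q$ times a local density; collecting these over the (finitely many relevant) $q$ and over the symmetric piece assembles the constant $C(f,h)\in\C$. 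For the remaining $q$ I would complete the sum over $n_2$ into residues mod $q$, turning $\sum_{n_2\le X}\e(M\overline{n_2}/q)$ into genuine Kloosterman sums $S(m,M;q)$; Weil's bound $|S(m,M;q)|\le\tau(q)q^{1/2}(m,M,q)^{1/2}$ then gives $\ll q^{1/2+\varepsilon}$ per $q$, and $\sum_{q\le\sqrt x}q^{1/2+\varepsilon}\ll x^{3/4+\varepsilon}$ for this coprime part.

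The hard part, and the true source of the exponent dichotomy, is the local behaviour at the ramified primes $p\mid\Delta$, where the two roots collide and $f$ has a repeated root modulo $p$. There the number of roots of $f(r)\equiv0\Mod{p^k}$ is inflated and the associated local exponential sum $\sum_r\e(hr/p^k)$ enjoys full cancellation precisely when $p\nmid h$; this is what should keep the overall error at $O(x^{3/4+\varepsilon})$ under the assumption $(h,\Delta)=1$. When $p\mid\gcd(h,\Delta)$, however, these local sums resonate rather than cancel, producing a secondary contribution that I expect to balance out at the weaker bound $O(x^{4/5+\varepsilon})$. Making the completion‑plus‑Weil estimate uniform across all moduli, and carrying out the careful local analysis at the primes dividing $ac\Delta$ (where the factors need not be coprime and the clean two‑root picture fails), is where the real work lies; the final step is to optimize the splitting point $\sqrt x$ against the truncation error in the main term.
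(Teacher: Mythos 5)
Your overall architecture matches the paper's: factor the modulus into a coprime product according to which linear factor each prime power divides, apply the Chinese remainder theorem to the character, use reciprocity ($\bar u/v+\bar v/u\equiv 1/(uv)\Mod 1$) to transfer the inverse of the long variable into the small modulus, then complete the resulting sum and apply Weil's bound for Kloosterman sums (the paper packages this last step as Lemma~\ref{IncSumBis}). But there is a genuine error in your identification of the main term. After reciprocity the inner sum has the shape $\sum_{n_2\le X,\,(n_2,q)=1}\e(M\bar n_2/q)$, and when you complete it the zero frequency contributes $\frac{X}{q}c_q(M)$, where $c_q(M)=\mu\big(\frac{q}{(q,M)}\big)\frac{\varphi(q)}{\varphi(q/(q,M))}$ is a Ramanujan sum. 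This is nonzero for \emph{every} squarefree $q$, not only for $q\mid h\Delta$: when $(M,q)=1$ it equals $\mu(q)$, giving a contribution of size $x/q^2$ for each $q\le\sqrt x$ and hence a total of order $x$. These terms are exactly what assemble into the Euler product in $C(f,h)$ (for instance the factor $\frac{6}{\pi^2}\prod_{p\mid ac}(1-p^{-2})^{-1}$ when $h=1$). Your claim that for $q\nmid h\Delta$ the incomplete sum is $\ll q^{1/2+\varepsilon}$ is therefore false --- only the nonzero frequencies obey that bound --- so as written your argument both computes the wrong constant and sweeps a quantity of order $x$ into an alleged $O(x^{3/4+\varepsilon})$ error term.

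Two further points. First, at the ramified primes you cannot simply ``isolate a bounded local factor'': the paper's Lemma~\ref{gDelta} shows that when $p\mid\Delta$ and $p^2\mid n$ the roots form complete arithmetic progressions modulo $n/p$ whose exponential sum vanishes provided $p\nmid h$, and it is this cancellation (not mere boundedness of local root counts) that reduces matters to squarefree $g=(n,\Delta)$. Second, the exponent $4/5$ cannot just be ``expected to balance out'': in the paper it arises by writing $n=\delta n'$ with $\delta\mid(h,ac\Delta)^\infty$, bounding the contribution of $\delta>B$ trivially by $x\log x/B^{1-\varepsilon}$, incurring a factor $B^{1/4}$ in the $O(x^{3/4+\varepsilon})$ error for $\delta\le B$, and choosing $B=x^{1/5-\varepsilon}$. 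You correctly locate where the difficulty lives, but the quantitative mechanism producing $4/5$ is missing from your proposal.
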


\noindent
The proof of this theorem provides an explicit but complicated formula for $C(f,h)$ (see equation~\eqref{C1 general} below).
In the particular case $h=1$, we obtain $S(f,x)=xC(f,1)+O(x^{4/5+\varepsilon})$ with
\begin{equation*}
C(f,1)=\Big (\frac{\mu (a)}{a}+\frac{\mu (c)}{c}\Big )\frac{6}{\pi ^2}\prod_{p\mid ac}\Big ( 1-\frac{1}{p^2}\Big )^{-1}.
\end{equation*}
(Note in particular that it is possible for $C(f,1)$ to equal $0$, namely if neither $a$ nor~$c$ is squarefree; in such a case, Theorem~\ref{1+1} is technically an upper bound rather than an asymptotic formula. The analogous pedantic comment applies to Theorem~\ref{k-linear} below.)

Our second result handles the case $f(n)=n(n^2+1)$.
In Section~\ref{1+2 sec} we will prove:

\begin{theorem}\label{1+2}
For $f(n)=n(n^2+1)$, we have
\begin{equation*}
S(f,x)=\frac{3x}4 \prod_{p\equiv1\Mod4} \Big (1-\frac2{p^2}\Big )+O\Big( \frac{x(\log\log x)^{7/2}}{(\log x)^{1-1/\sqrt{2}}} \Big).
\end{equation*}
\end{theorem}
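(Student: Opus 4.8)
The plan is to rewrite $S(f,x)$ as a sum of additive characters twisted by the square roots of $-1$, isolate a main term coming from complete residue classes, and bound the oscillatory remainder. The key structural input is that $(r,r^2+1)=1$ identically: for any $n$ with $n\mid r(r^2+1)$, each prime power dividing $n$ divides exactly one of $r$ and $r^2+1$, so $n$ factors uniquely as $n=ab$ with $(a,b)=1$, $a\mid r$, and $b\mid r^2+1$. Conversely, such a factorization together with a residue $\omega$ satisfying $b\mid\omega^2+1$ recovers $r$ by the Chinese remainder theorem; since $r\equiv0\Mod a$ one finds $r/(ab)\equiv\omega\,\overline a/b\Mod1$, where $\overline a$ is the inverse of $a$ modulo $b$. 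This yields
\[
S(f,x)=\sum_{\substack{ab\le x\\ (a,b)=1}}\ \sum_{\substack{\omega\bmod b\\ b\,\mid\,\omega^2+1}}\e\Big(\frac{\omega\,\overline a}{b}\Big).
\]

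First I would extract the main term. Fix $b$ and a root $\omega$, and sum over $a\le x/b$ with $(a,b)=1$. Since $a\mapsto\overline a$ permutes the units modulo $b$ and $\omega$ is itself a unit, the sum of $\e(\omega\overline a/b)$ over a complete set of units is the Ramanujan sum $\sum_{(c,b)=1}\e(c/b)=\mu(b)$; hence each complete block of length $b$ in $[1,x/b]$ contributes $\mu(b)$, for a main contribution $\sim(x/b^2)\mu(b)$ per root. Writing $\rho(b)$ for the number of solutions of $b\mid\omega^2+1$ — multiplicative with $\rho(2)=1$, $\rho(p)=2$ for $p\equiv1\Mod4$, and $\rho(p)=0$ for $p\equiv3\Mod4$ — and noting that $\mu$ kills higher prime powers, the total main term is
\[
x\sum_{b\ge1}\frac{\rho(b)\mu(b)}{b^2}=x\prod_p\Big(1-\frac{\rho(p)}{p^2}\Big)=\frac{3x}{4}\prod_{p\equiv1\Mod4}\Big(1-\frac{2}{p^2}\Big),
\]
which is exactly the claimed constant; the tail of this absolutely convergent series, after truncating $b$, is a lower-order error.

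The error term is the main obstacle, and I expect it to dominate the work. It comprises the incomplete-block remainders of the inner sums together with the tail of the $b$-summation, and the crucial point is that bounding an incomplete sum $\sum_{a\in I,(a,b)=1}\e(\omega\overline a/b)$ trivially recovers only $S(f,x)\ll x\log x$ (the total number of roots), so genuine cancellation in these inverse-twisted character sums is indispensable. The plan is to obtain such cancellation for each $b$ — for instance by reciprocity, replacing $\omega\overline a/b$ by $\omega/(ab)-\omega\,\overline b/a$ and exploiting the resulting sum modulo $a$, or by elementary completion — and then to sum the resulting bounds against the weight $\rho(b)$. Because $\rho$ is supported on integers composed of the primes $\equiv1\Mod4$, where it grows like a power of the number of prime factors, controlling $\sum_b\rho(b)\cdot|\text{(incomplete sum)}|$ demands a delicate Rankin/Shiu-type estimate for a multiplicative function; it is this estimate that produces the factor $(\log x)^{1/\sqrt2-1}(\log\log x)^{7/2}$, the same mechanism behind the exponents in the companion bound~\eqref{MS}. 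The technical heart will be making the cancellation uniform in $\omega$ and $b$, choosing the truncation level of the $b$-sum so that the incomplete-sum error balances against the tail $x\sum_b\rho(b)/b^2$, and carrying the multiplicative bookkeeping through to the stated powers of $\log x$ and $\log\log x$.
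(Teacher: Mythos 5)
Your opening decomposition and your computation of the main term match the paper exactly: the factorization $n=ab$ with $a\mid r$, $b\mid r^2+1$ is the paper's Lemma~\ref{kl}, the Ramanujan-sum evaluation of the complete blocks gives $x\sum_b\mu(b)\varrho(b)/b^2$, and the constant $\frac34\prod_{p\equiv1\Mod4}(1-2/p^2)$ is correct. The genuine gap is in the error term, which you rightly flag as the heart of the matter but whose treatment as sketched would fail. Your main-term extraction needs a nontrivial bound on the incomplete sums $\sum_{a\le x/b,\ (a,b)=1}\e(\omega\bar a/b)$; completion plus Weil gives $O(\sqrt{b}\,\tau(b)\log b)$, and summing this against $\varrho(b)$ is affordable only for $b\le x/y_2$ with $y_2$ a bit larger than $x^{1/3}$, since the total is of size $(x/y_2)^{3/2}$. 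For larger $b$ --- equivalently $a\lesssim x^{1/3}$ --- the $a$-sum has length $x/b\le x^{1/3}$ against a modulus $b\ge x^{2/3}$, so no completion or reciprocity argument in the $a$-variable can produce cancellation ``for each $b$'': the trivial bound is already far smaller than $\sqrt b$. This whole range must be handled by different mechanisms, and your sketch contains neither of the two the paper uses.

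Concretely, the paper splits into three ranges of $a$. In the middle range $x^{1/3}(\log x)^{-A}<a\le x^{1/3}(\log x)^{B}$ it appeals directly to Hooley's equidistribution theorem for the irreducible polynomial $X^2+1$ (made uniform in the twist $\bar a$); this single application is the sole source of the factor $(\log\log x)^{7/2}(\log x)^{1/\sqrt2-1}$ in the final error, rather than any Rankin--Shiu estimate on $\sum_b\varrho(b)\cdot|\cdot|$ as you propose. In the range $a\le x^{1/3}(\log x)^{-A}$ it invokes the Gauss correspondence expressing the roots $\omega$ of $\omega^2+1\equiv0\Mod b$ through representations $b=r^2+s^2$, turning $\omega\bar a/b$ into an explicit rational expression in $r,s,a$ to which the Wu--Xi splitting and Weil's bound for exponential sums of rational functions apply. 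Your reciprocity remark points toward the right change of variable, but the resulting sum over $b$ and its roots $\omega$ reduced modulo a small $a$ is precisely the object Hooley's theorem controls; without that input, or the sums-of-two-squares parametrization, the argument does not close.
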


\noindent
It is also possible to generalize Theorem~\ref{1+2} to $S(f,x,h)$ for a general nonzero integer $h$, as in the work of Hooley, though we do not do so here.
It is likely possible to extend Theorem \ref{1+2} to general products of two polynomials $f_1f_2$ with $f_1$ linear and $f_2$ irreducible quadratic
by adapting some ideas of \cite{LO12} or \cite{BBDT12}. However, such a
 generalization with a general $f_2$ in place of $n^2+1$ is not straightforward; the case where $f_2$ has positive discriminant
seems more difficult.

The next result, which we prove in Sections~\ref{1+1+1 sec} and~\ref{ses1 sec}, handles in detail a special product of three linear polynomials:

\begin{theorem}\label{1+1+1}
For $f(n)=n(n+1)(2n+1)$, we have 
\begin{equation*}
S(f,x)=x\prod_{p\ge 3}\Big (1-\frac{2}{p^2}\Big )+O\Big (\frac{x(\log\log x )^6}{(\log x)^2}\Big ).
\end{equation*}
\end{theorem}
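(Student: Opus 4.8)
The plan is to follow the strategy behind Theorem~\ref{1+1}, now with three linear factors in place of two. First I would parametrize the roots. Since $f(X)=X(X+1)(2X+1)$ splits into linear factors whose roots $0,-1,-\tfrac12$ are distinct modulo every prime $p\ge3$, while modulo $2^a$ the factor $2X+1$ is a unit and only the roots $0,-1$ survive, the Chinese Remainder Theorem gives a bijection between the roots $r$ of $f$ modulo $n$ and the coprime factorizations $n=d_0d_1d_2$ with $d_2$ odd, where $d_0,d_1,d_2$ collect the prime powers at which $r\equiv0$, $r\equiv-1$, and $2r\equiv-1$ respectively (recovering the root counts $3$ modulo odd prime powers and $2$ modulo powers of $2$). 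Expanding $r/n$ by CRT and using $r\equiv0\Mod{d_0}$ to kill the $d_0$-part of the phase, I would rewrite
\begin{equation*}
S(f,x)=\sum_{\substack{d_0d_1d_2\le x\\ d_0,d_1,d_2\text{ pairwise coprime},\ 2\nmid d_2}}\e\Big(\frac{-\overline{d_0d_2}}{d_1}\Big)\e\Big(\frac{-\overline{2d_0d_1}}{d_2}\Big),
\end{equation*}
where the overlines denote inverses modulo $d_1$ and modulo $d_2$ respectively.

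Next I would isolate the main term, which is a delicate combination rather than a single diagonal: already the three ``pure'' roots (one factor accounting for all of $n$) give $\e(0)$, $\e(-1/n)$, and $\e(\tfrac12-\tfrac1{2n})$, contributing $\approx x$, $\approx x$, and $\approx-\tfrac x2$, so the true constant arises only after substantial cancellation against the mixed configurations. The phase above oscillates unless one of $d_0,d_1,d_2$ dominates, so I would split the range by the largest variable and treat it as a long average. When $d_0$ dominates, summing over $d_0$ in residue classes modulo $d_1d_2$ collapses the inner sum into a product of Ramanujan sums $c_{d_1}(1)c_{d_2}(1)=\mu(d_1)\mu(d_2)$; when instead $d_1$ or $d_2$ dominates, the relevant average is an Estermann/Kloosterman-fraction sum of the shape $\sum_{c\le y}\e(a\overline c/c)$, whose asymptotic evaluation (with main term) supplies the remaining local contributions. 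Collecting the three dominant-variable contributions and carrying out the sums over the bounded variables, the local factors should assemble into an Euler product; a separate treatment of the prime $2$, where only the two roots $0,-1$ occur, is needed to check that its local factor equals $1$, leaving the stated constant $\prod_{p\ge3}(1-2/p^2)$. Since only $h=1$ is required, the twisting present in the general case does not enter.

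The hard part will be the error term, and I expect it to carry essentially all of the work (this is presumably the role of a dedicated exponential-sum estimate). The contributions not captured by a single dominant variable are those in which at least two of $d_0,d_1,d_2$ are simultaneously large; there the phase oscillates in two variables and one must extract genuine cancellation rather than a main term, which I would do via bounds on incomplete Kloosterman-type sums (Weil's bound for the completed sums, localized by completion and partial summation), uniformly in the parameters. The delicate point is that the number of admissible factorizations of $n$ grows like $3^{\omega(n)}$, so the cancellation must survive summation against this multiplicative weight; bounding the resulting weighted sums $\sum_{n\le x}3^{\omega(n)}(\cdots)$ by a Shiu/Rankin-type estimate is what I would expect to produce the $(\log\log x)^6$ factor, while the requirement that the two nontrivial factors $X+1$ and $2X+1$ both be active is what yields the two-fold logarithmic saving $(\log x)^{-2}$. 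Making the exponential-sum cancellation uniform enough to beat the $3^{\omega(n)}$ weight, and matching it cleanly against the main-term extraction of the previous paragraph, is the principal obstacle.
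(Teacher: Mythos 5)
Your set-up and main-term extraction match the paper: the same CRT parametrization into pairwise coprime triples $d_0d_1d_2$ with $d_2$ odd, the same phase, and the same evaluation of the ranges with one dominant variable via the incomplete Kloosterman average \eqref{IncSum eqn}, whose Ramanujan-sum main terms assemble into $x\prod_{p\ge3}(1-2/p^2)$. The gap is in the error term, and it is not a matter of bookkeeping: in the balanced range $d_0\approx d_1\approx d_2\approx x^{1/3}$ (the paper's $S_4(x)$), your proposed tool --- completion plus Weil plus partial summation --- gives nothing. There the variable you sum over has length $N\approx x^{1/3}$ while the modulus is $q\approx x^{2/3}$, so the resulting bound $O\big(\sqrt{q}\,\tau(q)\log q\big)$ is already of the same size as the trivial bound $N$; this is exactly the ``crucial range'' where Lemma~\ref{IncSumBis} fails, and no uniformity in the $3^{\omega(n)}$ weight can rescue an estimate that saves nothing to begin with. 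What the paper actually uses here is the $q$-analogue of the van der Corput method (arithmetic exponent pairs in the style of Graham--Ringrose, Irving, and Wu--Xi; Lemma~\ref{exp}), which extracts cancellation from incomplete sums of length about $q^{1/2}$ precisely because the modulus can be factored into three pieces of comparable size.

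Moreover, even that is not the source of the stated error term, so your accounting of where $(\log\log x)^6(\log x)^{-2}$ comes from is also off. The modulus $n_2n_3$ need not factor conveniently, so the paper first performs a smooth/rough splitting $n_i=a_ib_i$ with $P^+(a_i)\le z<P^-(b_i)$ and $z=\exp(\log x/(10\log\log x))$, applies Lemma~\ref{exp} only when the resulting pieces have usable sizes, and estimates the residual configuration (both $a_2,a_3$ small) \emph{trivially}, using the sieve bound \eqref{invSifted} for the two sifted variables $b_2,b_3$. It is this sieve estimate \eqref{S5} --- two factors of $1/\log z\asymp\log\log x/\log x$ --- that produces the $(\log x)^{-2}$ saving and the $(\log\log x)^6$, not Weil-type cancellation surviving a $3^{\omega}$ weight, and not ``both nontrivial factors being active'' in an exponential-sum sense. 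Without the factorable-modulus machinery and the smooth/rough decomposition, the balanced range cannot be controlled by the methods you describe.
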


It was clear that this result could be generalized to arbitrary products of three linear polynomials. However, after reading an earlier version of this manuscript, the anonymous referee pointed out that the method can be extended to give an asymptotic formula for $S(f,x,h)$ when $f$ is the product of four linear polynomials, as well as a nontrivial upper bound for $S(f,x,h)$ when $f$ is a product of any number of linear polynomials. We prove the following result in Sections~\ref{ses2 sec} and~\ref{k sec}:


\begin{theorem}\label{k-linear}
Let $h$ be a nonzero integer, let $k\ge3$ be an integer, and let $f(n)=\prod_{i=1}^k(a_in+b_i)$ where the coefficients $a_1,b_1,\ldots,a_k, b_k$ are integers satisfying
$(a_j,b_j)=1$ for all $1\le i\le k$ and $a_ib_j\ne a_jb_i$ for all $1\le i<j\le k$.
\begin{enumerate}[(i)]
\item If $k=3$ or $k=4$, then there exists $C(f,h)\in\C$ such that
\begin{equation*}
S(f,x,h)=C(f,h)x+O\big( x(\log x)^{k-5}(\log\log x)^6 \big).
\end{equation*}
\item
If $k\ge 5$, then $S(f,x,h)\ll x(\log x)^{k-5}(\log\log x)^6$.
\end{enumerate}
In both cases, the implicit constants depend on~$f$ and~$h$.
\end{theorem}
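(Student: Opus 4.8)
The plan is to convert $S(f,x,h)$ into a multiple exponential sum indexed by colored coprime factorizations of $n$, peel off a linear main term coming from the nearly-degenerate colorings, and reduce everything to one key cancellation estimate for the genuinely mixed part. First I would discard the finitely many \emph{bad primes}, namely those dividing $a_1\cdots a_k$ or one of the resultants $a_ib_j-a_jb_i$. Away from these the factors $a_iX+b_i$ have pairwise distinct roots modulo $p$, so by Hensel's lemma $f(r)\equiv0\pmod{p^\alpha}$ holds if and only if exactly one factor vanishes modulo $p^\alpha$. Hence, for $n$ built from good primes, the roots $r\bmod n$ of $f$ are in bijection with the ways of writing $n=n_1\cdots n_k$ as a product of pairwise coprime integers with $(a_i,n_i)=1$ (the prime powers dividing $n_i$ being exactly those at which the $i$-th factor vanishes), and $r$ is the CRT solution of $a_ir+b_i\equiv0\pmod{n_i}$. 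Setting $m_i=n/n_i$ and combining the local roots by the Chinese Remainder Theorem gives
\[
S(f,x,h)=\sum_{\substack{n_1\cdots n_k\le x\\ \text{pairwise coprime},\ (a_i,n_i)=1}}\ \prod_{i=1}^k \e\Big(\frac{-hb_i\,\overline{a_im_i}}{n_i}\Big)+(\text{contribution of bad primes and high prime powers}),
\]
where $\overline{a_im_i}$ is the inverse of $a_im_i$ modulo $n_i$, and the parenthetical remainder is controlled by a divisor-type count and feeds only the error term and finitely many Euler factors of the constant.

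Next I would isolate the main term, which comes from the \emph{nearly pure} colorings where one variable $n_i$ is unrestricted while the product of the companions $n_j$ ($j\ne i$) stays bounded. With the companions held fixed, the unit $a_im_i$ is then a \emph{fixed} integer, so the inverse $\overline{a_im_i}\bmod n_i$ equals a fixed rational multiple of $n_i$ plus a bounded shift; consequently the color-$i$ phase equals a bounded periodic function of $n_i$ times $1+O(1/n_i)$, and the one-variable sum $\sum_{n_i\le X}\e(-hb_i\overline{a_im_i}/n_i)$ has a main term of size $X$. Summing the absolutely convergent contributions over all bounded companion configurations assembles the constant $C(f,h)$ as a finite sum of Euler products, one per color; this is exactly the mechanism producing the constants in Theorems~\ref{1+1} and~\ref{1+1+1}, and it makes transparent why $C(f,h)$ can vanish. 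What remains is the \emph{off-diagonal} sum $\Sigma_{\mathrm{off}}$, in which at least two of the $n_i$ simultaneously exceed a suitable threshold, and the whole theorem reduces to the estimate $\Sigma_{\mathrm{off}}\ll x(\log x)^{k-5}(\log\log x)^6$.

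To bound $\Sigma_{\mathrm{off}}$ I would exploit the fact that once two variables are large, each unit $a_im_i$ is divisible by a large companion, so the relevant modular inverse is the inverse of a quantity growing with $x$; it is precisely here that the phases stop being essentially constant and cancellation appears, governed by the equidistribution of the inverses $\overline{a_im_i}\pmod{n_i}$—equivalently by Kloosterman-sum bounds of the type underlying the $k=2$ case, Theorem~\ref{1+1}. Freezing all variables but one leaves an incomplete Kloosterman-type sum whose modulus and coefficients depend on the frozen variables, and Weil's bound supplies square-root cancellation there. Resumming over a second large variable and then over the remaining $k-2$ colors should convert this analytic gain into the required saving of four powers of $\log x$ against the trivial count $x(\log x)^{k-1}$, while the auxiliary bookkeeping—dyadic localization of each $n_i$, separation of the common prime factors that appear once coprimality is relaxed, and averaging of the residual multiplicative weights by Shiu/Henriot-type bounds—accounts for the fixed loss $(\log\log x)^6$. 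Granting $\Sigma_{\mathrm{off}}\ll x(\log x)^{k-5}(\log\log x)^6$, part~(i) follows for $k=3,4$ because the error is then $o(x)$, whereas for $k\ge5$ the bound already dominates the linear main term, giving part~(ii).

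I expect this last estimate to be the main obstacle, for two interlocking reasons. First, the inverses $\overline{a_im_i}$ entangle all $k$ variables at once, so isolating the two variables on which to run the Kloosterman bound requires Weil's estimate to be uniform in moduli that themselves depend on the frozen variables, together with genuine control of the greatest common divisors that surface when the variables are unfrozen; bounding the Kloosterman error uniformly over the frozen ranges, rather than term-by-term, is the technical heart. Second, the argument must be calibrated so that it saves \emph{exactly} four logarithms uniformly in $k$ and confines every remaining loss to the single fixed power $(\log\log x)^6$; this forces a careful organization of the ranges of the $n_i$ (dyadically and by number of prime factors) so that neither the completion step in the Kloosterman bound nor the combinatorial resummation over the $k-2$ free colors leaks an extra power of $\log x$. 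Achieving this delicate balance, rather than any single estimate, is where the real work lies.
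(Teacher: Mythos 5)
Your opening moves match the paper's: factor $n$ into pairwise coprime colored parts $m_1\cdots m_k$ with $m_i\mid a_ir+b_i$, recombine the local roots by the Chinese remainder theorem, and extract the main term from the configurations where one variable dominates by applying the Ramanujan-sum main term and Kloosterman-sum error of Lemma~\ref{IncSumBis} to that variable. (The paper organizes this by the condition $\max_i m_i>x^{1/3}(\log x)^{10k}$ rather than by bounding the companion product, and it must also carry a separate parameter $\delta\mid(hA)^\infty$ to absorb the primes dividing the coefficients and resultants; but these are details.) The genuine gap is in your treatment of the off-diagonal sum, which you correctly identify as the heart of the matter but for which your proposed mechanism fails. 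In the regime where every $m_i$ is at most about $x^{1/3}$ while their product is close to $x$ --- a regime that carries a full power of $x$ times powers of $\log x$ --- whichever variable you free has summation length $N\le x^{1/3+o(1)}$, while the modulus $q=\prod_{j\ne i}m_j$ of the resulting incomplete Kloosterman sum is at least $x^{2/3-o(1)}$. Completing the sum costs a factor $\sqrt q\gg N^{1-o(1)}$, so Weil's bound gives nothing beyond the trivial estimate: this is exactly the ``crucial range'' the paper flags in its discussion of $S_2(x)$ in the proof of Theorem~\ref{1+2}. Your plan of ``Weil on one variable, then resum over a second large variable'' cannot produce any saving there, let alone four powers of $\log x$.

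What the paper actually does in this balanced range is the technical core of the proof and is absent from your outline: it invokes the $q$-analogue of the van der Corput method (the $A^k$-process of Heath--Brown, Graham--Ringrose, Irving, and Wu--Xi, in the form of Lemmas~\ref{exp} and~\ref{AkB}), which beats the $\sqrt q$ completion loss by exploiting a factorization of the modulus into several pieces of controlled size. Manufacturing such a factorization forces a further splitting of each comparably-sized variable into $z$-smooth and $z$-rough parts $m_i=c_id_i$, and even then a residual configuration survives (at least two smooth parts $c_i\le v$, the sum $T_5^{(1)}$ in the paper) where only a sieve bound of the form~\eqref{invSifted} applies, saving a factor $(\log z)^2=(\log x/\log\log x)^2$ over the trivial count. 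It is precisely this residual case that produces the exponent in $x(\log x)^{k-5}(\log\log x)^6$ and explains why the theorem gives an asymptotic formula only for $k\le4$ and a bare upper bound for $k\ge5$ --- a distinction your argument, which simply asserts the off-diagonal bound, does not account for. Without this additional machinery the estimate $\Sigma_{\mathrm{off}}\ll x(\log x)^{k-5}(\log\log x)^6$ is out of reach by the method you describe.
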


As the number of fractions $r/n$, where $0<r<n$ and $f(r)\equiv 0\Mod n$, for all $n\le x$ has order of magnitude $x(\log x)^{\kappa-1}$, where $\kappa$ is the number of distinct irreducible factors of~$f$, these theorems all imply that the appropriate sequences of fractions are uniformly distributed in $]0,1[$.

It is also worth remarking that it is only for convenience that we hypothesize that the linear factors in Theorems~\ref{1+1} and~\ref{k-linear} are not proportional to one another. Indeed, more generally consider a polynomial $f = g^2 h$: 
the roots of $f$ modulo squarefree integers $n$ are unaffected by the repeated factor $g^2$, while if $p^2\mid n$ then the roots of $g^2\Mod n$ form arithmetic progressions with common difference $n/p$, and thus their contribution to the exponential sum $S(f,x,h)$ vanishes completely.

Before proceeding to the proofs of our theorems, we establish in the next section a lemma that is used repeatedly throughout the paper. We adopt the following notation and conventions throughout this paper: when $a$ and $b$ are relatively prime integers, $\bar a_b$ will denote an integer such that $\bar a_ba\equiv 1\Mod b$.
Furthermore, when the context will be clear,  we will also use the simplified  notation $\bar a$ for this multiplicative inverse of $a$ to the implied modulus $b$, which is often the denominator of the fraction in whose numerator $\bar a$ appears.
The letter $p$ usually denotes a prime number. 
We adopt the convention throughout that $\e (t)=\exp (2i\pi t)$ unless $t$ contains an expression of the form $\frac{\bar a}b$ where $(a,b)>1$, in which case $\e(t)=0$. For example, the expression $\sum_{0\le n\le p-1} \e(\frac{\bar n\overline{(n-1)}}p)$ is to be interpreted as $0+0+\sum_{2\le n\le p-1} \e(\frac{\bar n_p\overline{(n-1)}_p}p)$.

\medskip


\section{Exponential sums involving multiplicative inverses}

We begin by establishing the following lemma on exponential sums (a slight variation of a result of Hooley~\cite[Lemma 3]{H67}), which is the crucial tool of our paper.

\begin{lemma}\label{IncSumBis}
For any $y<z$, $q\ge 2$, $t\in\Z$ and $(m,q)=1$ we have 
\begin{equation*}
\sum_{\substack{{y<n\le z}\\{(n,q)=1}\\{n\equiv u\Mod m}}}\e\Big (\frac{t\bar n_q}{q}\Big )
=\frac{z-y}{mq}\mu\Big (\frac{q}{(q,t)}\Big )\frac{\varphi (q)}{\varphi (q/(t,q))}+O\big( \sqrt{q(t,q)} \cdot \tau (q)\log q \big).
\end{equation*}
In particular,
\begin{equation} \label{IncSum eqn}
\sum_{\substack{{y<n\le z}\\{(n,q)=1}}}\e \Big ( \frac{t\bar n_q}{q}\Big )= \frac{z-y}{q} \mu\Big (\frac{q}{(q,t)}\Big )\frac{\varphi (q)}{\varphi (q/(t,q))}+O\big( \sqrt{q(t,q)}\cdot \tau (q)\log q \big).
\end{equation}
\end{lemma}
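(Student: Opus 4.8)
The plan is to reduce the sum to a complete exponential sum modulo~$q$ by the classical completion technique, to extract the main term as a Ramanujan sum, and to bound the remaining frequencies using Weil's estimate for Kloosterman sums. Since $(m,q)=1$, I would first write $n=u+m\ell$, so that the congruence $n\equiv u\Mod m$ becomes the statement that $\ell$ ranges over the interval $I=\big(\tfrac{y-u}{m},\tfrac{z-u}{m}\big]$, of length $L=(z-y)/m$. The summand $\e(t\bar n_q/q)$ depends on $n$, and hence on $\ell$, only modulo~$q$, so setting $G(\ell)=\e\big(t\,\overline{(u+m\ell)}_q/q\big)$ (with the convention that $G(\ell)=0$ when $(u+m\ell,q)>1$) produces a $q$-periodic function, and I am left to estimate $\sum_{\ell\in I}G(\ell)$.

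Next I would complete the sum: writing $\hat G(a)=\sum_{b\Mod q}G(b)\e(-ab/q)$, one has
\[
\sum_{\ell\in I}G(\ell)=\frac1q\sum_{a\Mod q}\hat G(a)\sum_{\ell\in I}\e\Big(\frac{a\ell}{q}\Big).
\]
The term $a=0$ produces the main term: the inner sum equals $L+O(1)$, while $\hat G(0)=\sum_{(v,q)=1}\e(t\bar v_q/q)=c_q(t)$ is the Ramanujan sum, obtained via the substitution $v=u+mb$ (a bijection on $\Z/q\Z$ because $(m,q)=1$) followed by $v\mapsto\bar v_q$. Using $c_q(t)=\mu\big(q/(q,t)\big)\varphi(q)/\varphi\big(q/(t,q)\big)$ gives exactly the claimed main term $\frac{z-y}{mq}\mu(q/(q,t))\varphi(q)/\varphi(q/(t,q))$, with an acceptable error $O(|c_q(t)|/q)=O(1)$.

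For $a\not\equiv0$, the geometric sum satisfies $\big|\sum_{\ell\in I}\e(a\ell/q)\big|\ll q/|a|$ for $1\le|a|\le q/2$, so the contribution is $\ll\sum_{1\le|a|\le q/2}|\hat G(a)|/|a|$. The key step is to identify $\hat G(a)$ as a Kloosterman sum: substituting $v=u+mb$ and then $w=\bar v_q$ yields
\[
\hat G(a)=\e\Big(\frac{a\bar m_q u}{q}\Big)\sum_{(w,q)=1}\e\Big(\frac{tw-a\bar m_q\bar w}{q}\Big)=\e\Big(\frac{a\bar m_q u}{q}\Big)\,S(t,-a\bar m_q;q).
\]
Weil's bound $|S(A,B;q)|\le (A,B,q)^{1/2}\tau(q)\,q^{1/2}$, together with $(t,-a\bar m_q,q)=(t,a,q)\le(t,q)$ since $(\bar m_q,q)=1$, gives $|\hat G(a)|\le (t,q)^{1/2}\tau(q)\,q^{1/2}$.

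Summing $1/|a|$ over $1\le|a|\le q/2$ then produces the factor $\log q$, so the off-diagonal contribution is $\ll\sqrt{q(t,q)}\,\tau(q)\log q$, as required; equation~\eqref{IncSum eqn} is the special case $m=1$. The one genuinely nontrivial ingredient is Weil's bound for Kloosterman sums to a general modulus, and everything else is careful but routine bookkeeping with the Chinese Remainder Theorem. The main point to watch is that performing the completion modulo~$q$ (after the substitution $n=u+m\ell$), rather than directly modulo~$mq$, is exactly what keeps the error term at $\log q$ rather than $\log(mq)$.
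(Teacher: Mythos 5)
Your proposal is correct and follows essentially the same route as the paper: completion of the sum modulo $q$, the Ramanujan sum $c_q(t)$ for the main term, Weil's bound for Kloosterman sums to composite moduli (the paper cites Hooley's form $\ll\sqrt{q(t,q)}\,\tau(q)$) for the nonzero frequencies, and the geometric-series bound summed over $a$ to produce the $\log q$. The only cosmetic difference is that you substitute $n=u+m\ell$ before completing, while the paper completes in $n$ first and performs the change of variables $\eta=hm$ inside the incomplete sum afterwards.
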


\noindent (While we have written the statement of the lemma, for ease of reference, with explicit subscripts on the multiplicative inverses~$\bar n_q$ and explicit coprimality conditions of summation, we immediately revert to our conventions of suppressing these notational signals.)

\begin{proof}
We begin by collecting the summands according to the value of $n\Mod q$, which we then detect using a further additive character:
\begin{align}
\sum_{\substack{{y<n\le z}\\{n\equiv u\Mod m}}}\e\Big (\frac{t\bar n}{q}\Big ) &= \sum_{a\Mod q} \e\Big(\frac{t\bar a}{q}\Big ) \sum_{\substack{{y<n\le z}\\n\equiv a\Mod q \\ {n\equiv u\Mod m}}} 1 \notag \\
&= \sum_{a\Mod q} \e\Big(\frac{t\bar a}{q}\Big ) \sum_{\substack{{y<n\le z} \\ {n\equiv u\Mod m}}} \frac1q \sum_{h=0}^{q-1} \e \Big( \frac{h(n-a)}q \Big) \notag \\
&= \frac1q \sum_{h=0}^{q-1} \bigg( \sum_{a\Mod q} \e\Big( \frac{t\bar a-ha}q \Big) \bigg) \bigg( \sum_{\substack{{y<n\le z} \\ {n\equiv u\Mod m}}} \e\Big( \frac{hn}q \Big) \bigg), \label{Kloostermania}
\end{align}
with the convention $\e (t\bar a/q)=0$ if $(a,q)>1$ as mentioned in the end of the introduction.
The $h=0$ summand contributes the main term:
\begin{align*}
\frac1q \bigg( \sum_{a\Mod q} \e\Big( \frac{t\bar a}q \Big) \bigg) \bigg( \sum_{\substack{{y<n\le z} \\ {n\equiv u\Mod m}}} 1 \bigg) &= \frac1q \bigg( \mu\Big (\frac{q}{(q,t)}\Big )\frac{\varphi (q)}{\varphi (q/(t,q))} \bigg) \bigg( \frac{z-y}m + O(1) \bigg) \\
&= \frac{z-y}{qm} \mu\Big (\frac{q}{(q,t)}\Big )\frac{\varphi (q)}{\varphi (q/(t,q))} + O(1),
\end{align*}
since the first sum on the left-hand side is a complete Ramanujan sum which has been evaluated classically (see~\cite[equation~(4.7)]{MV07}). As for the summands where $h\ne0$, the first inner sum on the right-hand side of equation~\eqref{Kloostermania} is a complete Kloosterman sum, which was shown by Hooley~\cite[Lemma 2]{H57} (using Weil's bounds for exponential sums) to be $\ll \sqrt{q(t,q)} \cdot \tau(q)$; therefore
\begin{align*}
\frac1q \sum_{h=1}^{q-1} \bigg( \sum_{a\Mod q} & \e\Big( \frac{t\bar a-ha}q \Big) \bigg) \bigg( \sum_{\substack{{y<n\le z} \\ {n\equiv u\Mod m}}} \e\Big( \frac{hn}q \Big) \bigg) \\
&\ll \sqrt{q(t,q)} \cdot \tau(q) \cdot \frac1q \sum_{h=1}^{q-1} \bigg| \sum_{\substack{{y<n\le z} \\ {n\equiv u\Mod m}}} \e\Big( \frac{hn}q \Big) \bigg| \\
&= \sqrt{q(t,q)} \cdot \tau(q) \cdot \frac1q \sum_{h=1}^{q-1} \bigg| \sum_{(y-u)/m<\lambda\le(z-u)/m} \e\Big( \frac{h(m\lambda+u)}q \Big) \bigg| \\
&= \sqrt{q(t,q)} \cdot \tau(q) \cdot \frac1q \sum_{\eta=1}^{q-1} \bigg| \e\Big( \frac{\eta\bar m u}q \Big) \sum_{(y-u)/m<\lambda\le(z-u)/m} \e\Big( \frac{\eta\lambda}q \Big) \bigg|,
\end{align*}
since $(m,q)=1$ and thus the change of variables $\eta=hm$ permutes the nonzero residue classes modulo~$q$. This last sum is a geometric series and is consequently $\ll \|\eta/q\|^{-1}$, where $\|u\|$ is the distance from $u$ to the nearest integer. Therefore we have the following bound for these summands:
\[
\ll \sqrt{q(t,q)} \cdot \tau(q) \cdot \frac2q \sum_{\eta=1}^{q/2} \frac q\eta \ll \sqrt{q(t,q)} \cdot \tau(q)\log q,
\]
as required.
\end{proof}

\section{Reducible quadratics (Theorem~\ref{1+1})}  \label{1+1 sec}

Throughout this section, we consider $f(n)=(an+b)(cn+d)$ with $a,b,c,d$ as in Theorem \ref{1+1}.
All implicit constants in this section may depend on~$f$, $h$, and (where appropriate)~$\varepsilon$.

Define $\Delta = ad-bc$.  
The simultaneous congruences $an+b\equiv cn+d\equiv 0\Mod p$ have a solution only when
$-b\bar a\equiv -d\bar c \Mod p$, or equivalently when $p\mid\Delta$.
We start with the case $(h,ac\Delta)=1$; in the following section we indicate how to deal with the general case.

\subsection{The case $(ac\Delta,h)=1$}

We begin by handling some coprimality problem between the denominators $n$ and $\Delta$.
\begin{lemma}\label{gDelta}
We may write
\begin{equation*}
S(f,x,h)=\sum_{\substack{{g\mid\Delta}\\{(g,ac)=1}}}\mu^2(g)\sum_{\substack {{m\le x/g}\\{(m,\Delta)=1}}}\sum_{
\substack{{r\Mod{mg}}\\{(ar+b)(cr+d)\equiv 0\Mod {mg}}}}\e\Big (\frac{hr}{mg}\Big ).
\end{equation*}
\end{lemma}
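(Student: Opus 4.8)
The plan is to write $S(f,x,h)=\sum_{n\le x}\Sigma(n)$, where $\Sigma(n):=\sum_{r\Mod n,\ f(r)\equiv0\Mod n}\e(hr/n)$ denotes the inner root sum for the modulus $n$, and to prove that $\Sigma(n)=0$ whenever $n$ fails to factor as $n=mg$ with $g\mid\Delta$ squarefree, $(g,ac)=1$ and $(m,\Delta)=1$. Granting this, the lemma becomes bookkeeping: a surviving $n$ has exactly one such factorization, namely $g=(n,\Delta)$ and $m=n/g$ (since $g\mid\Delta$ while $(m,\Delta)=1$ forces $g$ to absorb every common prime of $n$ and $\Delta$). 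Thus the surviving $n\le x$ are in bijection with the admissible pairs $(g,m)$ subject to $mg\le x$, and since the non-surviving $n$ contribute $0$ to $S(f,x,h)=\sum_{n\le x}\Sigma(n)$, we may discard them and reindex by $(g,m)$ to obtain the asserted double sum, whose inner sum over $r\Mod{mg}$ is by definition $\Sigma(mg)$.

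So the real content is two vanishing statements, both using $(h,ac\Delta)=1$. First, if some prime $p\mid(a,c)$ divides $n$, then $(a,b)=(c,d)=1$ give $p\nmid b$ and $p\nmid d$, so $f(r)=(ar+b)(cr+d)\equiv bd\not\equiv0\Mod p$ for every $r$; hence $f$ has no root modulo $p$, a fortiori none modulo $n$, and $\Sigma(n)$ is an empty sum. Since $p\mid(a,c)$ implies $p\mid\Delta$, this disposes of every prime dividing $n$, $ac$ and $\Delta$ at once. Second, suppose $p\mid\Delta$, $p\nmid ac$ and $p^2\mid n$; here I would exploit the shift $r\mapsto r+n/p$. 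Writing $f(X)=acX^2+(ad+bc)X+bd$, we have $f(r+n/p)=f(r)+f'(r)(n/p)+ac(n/p)^2$, and the quadratic term $ac(n/p)^2=ac\,n\,(n/p^2)\equiv0\Mod n$ because $p^2\mid n$. Moreover any root $r$ of $f$ modulo $p$ must satisfy $ar+b\equiv cr+d\equiv0\Mod p$ simultaneously—their common root modulo $p$ exists precisely because $p\mid\Delta$—so $f'(r)=a(cr+d)+c(ar+b)\equiv0\Mod p$ and therefore $f'(r)(n/p)\equiv0\Mod n$. Hence $f(r+n/p)\equiv f(r)\Mod n$, so $r\mapsto r+n/p$ is a bijection of the root set of $f$ modulo $n$; as it multiplies each summand by $\e\big(h(n/p)/n\big)=\e(h/p)$, we obtain $\Sigma(n)=\e(h/p)\Sigma(n)$, and since $(h,\Delta)=1$ forces $\e(h/p)\ne1$ we conclude $\Sigma(n)=0$.

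Finally I would check that these two cases exhaust the non-surviving $n$. If $\Sigma(n)\ne0$, then by the first case no prime of $(a,c)$ divides $n$, and for any prime $p\mid(n,\Delta)$ we must have $p\nmid ac$, for otherwise $p\mid ac$ together with $p\mid\Delta$ would force $p\mid(a,c)$, already excluded. The second case then forces $p^2\nmid n$ for each such $p$, so $g:=(n,\Delta)$ is squarefree and coprime to $ac$ while $m:=n/g$ is coprime to $\Delta$—exactly the required factorization. The one point demanding care is the second vanishing statement: rather than enumerating the roots of $f$ modulo prime powers, the shift $r\mapsto r+n/p$ reduces everything to the single congruence $f'(r)\equiv0\Mod p$, which holds precisely because the two linear factors coincide modulo the primes $p\mid\Delta$, and the coprimality of $h$ with $\Delta$ is exactly what turns the resulting functional equation $\Sigma(n)=\e(h/p)\Sigma(n)$ into the conclusion $\Sigma(n)=0$.
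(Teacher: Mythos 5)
Your proof is correct and follows essentially the same route as the paper: you identify the same two vanishing mechanisms (no roots modulo a prime dividing $(a,c)$, and cancellation of the exponential sum over the $p$ translates $r\mapsto r+n/p$ of a root when $p\mid\Delta$, $p^2\mid n$, $p\nmid h$), merely packaging the second as the functional equation $\Sigma(n)=\e(h/p)\Sigma(n)$ rather than summing the geometric series directly, and then reindexing the surviving moduli by the unique factorization $n=mg$ with $g=(n,\Delta)$.
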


\begin{proof}
First we sort by $g=(n,\Delta)$:
\begin{equation*}
S(f,x,h)=\sum_{g\mid\Delta}\sum_{\substack {{m\le x/g}\\{(m,\Delta/g)=1}}}\sum_{
\substack{{r\Mod{mg}}\\{(ar+b)(cr+d)\equiv 0\Mod {mg}}}}\e\Big (\frac{hr}{mg}\Big ).
\end{equation*}
If $p\mid g\mid\Delta$ and $p\mid a$ then $p\mid c$ as well, in which case $(ax+b)(cx+d)\equiv bd\not\equiv 0\Mod p$ and there are no roots modulo~$p$. Similarly, if $p\mid(g,c)$, then $f$ has no root modulo~$p$. Thus we can add the condition of summation $(g,ac)=1$.

We can also assume that $p^2\nmid mg$ for every $p\mid\Delta$. Indeed, if $mg=qp^2$ for such a prime $p$, and if $r$ is a root of $(ax+b)(cx+d)$ modulo $qp^2$, then both $ar+b\equiv 0\Mod{p}$ and $cr+d\equiv 0\Mod p$ since $p\mid\Delta$; and then all of
$r+qp$, $r+2qp$, \dots, $r+(p-1)qp$ are roots of $(ax+b)(cx+d)$ modulo $qp^2$, since
\begin{equation*}\begin{split}
f(r+\lambda qp)&\equiv (ar+b)(cr+d)+\lambda qp (a(cr+d)+b(ar+b)) \equiv 0\Mod{qp^2}.
\end{split}
\end{equation*}
The exponential sum over these roots $r$, $r+qp$, \dots, $r+(p-1)qp$ vanishes (here we use the assumption $(h,\Delta)=1$, so that $p\nmid h$), which justifies omitting these terms.

In particular, we may assume that $g$ is squarefree (whence the introduction of the factor $\mu^2(g)$) and that $(m,g)=1$ (which combines with the existing condition $(m,\Delta/g)=1$ to yield simply $(m,\Delta)=1$), completing the proof.
\end{proof}

Next we separate the congruence conditions between the two factors of~$f$.

\begin{lemma}\label{rg} Suppose that $g\mid\Delta$ is squarefree, $(g,c)=1$, and $(m,\Delta)=1$.
The roots of $(ar+b)(cr+d)\equiv 0\Mod {mg}$ are in one-to-one correspondence with the factorizations $k\ell=m$
with $(k,\ell )=1$, $(k,a)=1$, and $(\ell ,c)=1$. The root $r$ corresponds to the solution of the system of congruences
\begin{equation*}
\begin{split}
ar+b & \equiv 0 \Mod k\\
cr+d & \equiv 0\Mod\ell\\
r&\equiv r_g\Mod g,\\
\end{split}
\end{equation*}
where $r_g$ is the residue class $r_g\equiv -d\bar c\Mod g$.
\end{lemma}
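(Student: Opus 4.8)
The plan is to use the Chinese Remainder Theorem to reduce the single congruence modulo $mg$ to a pair of independent congruences, one modulo $m$ and one modulo $g$. This splitting is legitimate because the hypotheses $g\mid\Delta$ and $(m,\Delta)=1$ force $(m,g)=1$. I would dispose of the modulus $g$ first, and then analyze the modulus $m$, where the correspondence with factorizations $k\ell=m$ actually arises; finally I would reassemble the two pieces by CRT.

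For the modulus $g$, since $g$ is squarefree it suffices to work prime by prime. I would first record the useful observation that the hypothesis $(g,c)=1$, together with $g\mid\Delta$ and $(a,b)=1$, forces $(g,a)=1$ as well: if a prime $p\mid g$ divided $a$, then from $p\mid\Delta=ad-bc$ and $p\nmid c$ we would get $p\mid b$, contradicting $(a,b)=1$. Thus $p\nmid ac$ for every prime $p\mid g$. Now $cr+d\equiv0\Mod p$ has the unique solution $r\equiv-d\bar c\Mod p$, and a one-line computation gives $a(-d\bar c)+b\equiv\bar c(bc-ad)\equiv-\bar c\Delta\equiv0\Mod p$ because $p\mid\Delta$. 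Hence both linear factors vanish at the same residue, so $(ar+b)(cr+d)\equiv0\Mod p$ has the single root $r\equiv-d\bar c\Mod p$; assembling over the distinct primes dividing the squarefree $g$ yields the unique root $r\equiv r_g\equiv-d\bar c\Mod g$.

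For the modulus $m$, I would write $m=\prod_p p^{e_p}$ and analyze the roots of $(ar+b)(cr+d)\equiv0\Mod{p^{e_p}}$ for each $p\mid m$. The crux of the argument is that, because $(m,\Delta)=1$, we have $p\nmid\Delta$, so the two factors can never be simultaneously divisible by $p$: if $p\mid ar+b$ and $p\mid cr+d$, then $p$ divides $c(ar+b)-a(cr+d)=bc-ad=-\Delta$, a contradiction. Consequently, for any root $r$ modulo $p^{e_p}$, writing $p^i\,\|\,(ar+b)$ and $p^j\,\|\,(cr+d)$, we cannot have both $i\ge1$ and $j\ge1$; since $i+j\ge e_p$, one factor must already vanish to the full order $p^{e_p}$. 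Thus every root modulo $p^{e_p}$ is a root of exactly one of the two factors modulo $p^{e_p}$ (exactly one, because the two factor-roots are distinct modulo $p$, again since $p\nmid\Delta$). A root of $ar+b\equiv0\Mod{p^{e_p}}$ exists, and is then unique, precisely when $p\nmid a$ (the case $p\mid a$ being impossible, as $(a,b)=1$ would force $p\mid b$), and symmetrically for $cr+d$ when $p\nmid c$.

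It then remains to do the bookkeeping. Assigning to each $p\mid m$ the factor that its prime power solves partitions the prime-power components of $m$ into two coprime products, $k$ collecting those solving $ar+b$ and $\ell$ those solving $cr+d$, with $k\ell=m$ and $(k,\ell)=1$; the requirement that the first factor can be solved only when $p\nmid a$ reads exactly $(k,a)=1$, and likewise $(\ell,c)=1$. Conversely, any factorization $k\ell=m$ with $(k,\ell)=1$, $(k,a)=1$, $(\ell,c)=1$ determines, via $ar+b\equiv0\Mod k$ and $cr+d\equiv0\Mod\ell$ (each uniquely solvable under these coprimality conditions) together with $r\equiv r_g\Mod g$, a unique residue $r$ modulo $mg$ by CRT, giving the claimed one-to-one correspondence. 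The only genuine obstacle is the valuation argument in the modulus-$m$ step; the remainder is CRT bookkeeping and the elementary coprimality deductions above.
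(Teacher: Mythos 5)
Your proof is correct and is essentially the detailed version of what the paper dismisses as ``straightforward to check'': the paper merely exhibits the inverse map $k=(ar+b,m)$, $\ell=(cr+d,m)$, and your prime-by-prime analysis --- resting on the key observation that $p\nmid\Delta$ prevents both linear factors from vanishing simultaneously modulo any $p\mid m$, so each prime power in $m$ is ``claimed'' by exactly one factor --- is precisely the verification that this assignment is a bijection. No gaps.
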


\begin{proof}
It is straightforward to check that the factorization corresponding to a root $r$ of $(ar+b)(cr+d)\equiv 0\Mod {mg}$ is $k = (ar+b,m)$ and $\ell=(cr+d,m)$, and that this correspondence is the inverse function to the correspondence described in the statement of the lemma.
\end{proof}

By the Chinese remainder theorem, the solution of the system of congruences given in Lemma~\ref{rg} can be written as 
\begin{equation*}r=-b\bar a_k\ell\bar\ell _kg\bar g_k-d\bar c_\ell k\bar k_\ell g\bar g_\ell +r_gk\bar k_g\ell\bar\ell_g.
\end{equation*}
We thus obtain 
\begin{equation*}
\begin{split}
S(f,x,h)&=\!\!\!\sum_{\substack{{g\mid\Delta}\\{(g,ac)=1}}}\!\!\!\mu ^2(g)\sum_{\substack {{m\le x/g}\\{(m,\Delta)=1}}}\!\!\!\!\!\!\!
\sum_{\substack{{k\ell =m}\\{(k,\ell )=1}\\{(k,a)=(\ell ,c)=1}}}\!\!\!\!\!\!\!\e\Big (\frac{
-hb\bar a_k\ell\bar\ell _kg\bar g_k-hd\bar c_\ell k\bar k_\ell g\bar g_\ell +hr_gk\bar k_g\ell\bar\ell_g}{mg}\Big )\\
& =\sum_{\substack{{g\mid\Delta}\\{(g,ac)=1}}}\mu ^2(g)\sum_{\substack{{k\ell \le x/g}\\{(k,\ell )=1}\\{(k,a)=(\ell ,c)=1}\\{(k\ell,\Delta)=1}}}\e\Big (\frac{
-hb\bar a_k\bar\ell _k\bar g_k}{k}\Big )\e\Big (\frac{-hd\bar c_\ell \bar k_\ell \bar g_\ell}{\ell}\Big )
\e\Big (\frac{hr_g\bar k_g\bar\ell_g}{g}\Big ).
\end{split}
\end{equation*}
We split this into two sums $S(f,x)=S_1(x)+S_2(x)$, where $k\le\sqrt{x/g}$ in $S_1(x)$ and $k>\sqrt{x/g}$ in 
$S_2 (x)$.

In $S_1(x)$, we use the standard ``inversion formula'' (obtained with B\'ezout's identity) for $(u,v)=1$,
\begin{equation} \label{inversion formula}
\frac{\bar u}{v}+\frac{\bar v}{u}\equiv \frac{1}{uv}\Mod 1.
\end{equation}
This formula, used on the second exponential factor in the inner sum above, allows us to move $\ell$ out of the denominators of the exponential terms and write $S_1(x)$~as
\begin{equation*}
\sum_{\substack{{g\mid\Delta}\\{(g,ac)=1}}}\mu ^2(g)
\sum_{\substack{{k\le\sqrt{x/g}}\\{(k,a\Delta)=1}}}\sum_{\substack{{\ell\le x/gk}\\{(\ell, k)=(\ell ,c\Delta)=1}}}
\e\Big (\frac{-hb\bar a_k\bar\ell _k\bar g_k}{k}\Big )\e\Big (\frac{hd\bar\ell _{cgk}}{cgk}\Big )\e\Big (\frac{-hd}{cgk\ell}\Big )\e\Big (\frac{hr_g\bar k_g\bar\ell _g}{g}\Big ).
\end{equation*}
Next we remark that $\e (-d/cgk\ell)=1+O(1/gk\ell)$. This effect of this error term is sufficiently small:
\begin{equation*}\begin{split}
S_1(x)&=\sum_{\substack{{g\mid\Delta}\\{(g,ac)=1}}}\mu ^2(g)
\sum_{\substack{{k\le\sqrt{x/g}}\\{(k,a\Delta)=1}}}\sum_{\substack{{\ell\le x/gk}\\{(\ell, k)=(\ell ,c\Delta)=1}}}
\e\Big (\frac{(-bcg\bar a_k\bar g_k+d+ckr_g\bar k_g)h\bar\ell_{cgk}}{cgk}\Big )\\
&\qquad{}+O((\log x)^2),\\
\end{split}
\end{equation*}
since $\bar\ell_k\equiv\bar\ell _{cgk}\Mod k$ and $\bar\ell _g\equiv\bar\ell_{cgk}\Mod g$. (We remind the reader that the implicit constants in this section may depend on $f$, $h$, and~$\varepsilon$.)

We rewrite the inner sum over $\ell$ as
\begin{equation}\label{sigmal}
\sum_{\substack{{\ell\le x/gk}\\{(\ell ,ck\Delta)=1}}}\e\Big (\frac{\frac{\Delta}{g}(-bcg\bar a_k\bar g_k+d+ckr_g\bar k_g)h\bar \ell_{ck\Delta}}{ck\Delta}\Big ),
\end{equation}
and then apply equation~\eqref{IncSum eqn} with $q=ck\Delta$ and $t=ht'$, where $t'=\frac{\Delta}{g}(-bcg\bar a_k\bar g_k+d+ckr_g\bar k_g)$. Since $d+ckr_g\bar k_g\equiv d+cr_g\equiv 0\Mod g$, we see that $t'$ is a multiple of $\Delta$ but is relatively prime to both $c$ and~$k$.
We thus obtain
\begin{equation*}\begin{split}
S_1(x)&=x\sum_{\substack{{g\mid\Delta}\\{(g,ac)=1}}}\mu ^2(g)\sum_{\substack{{k\le\sqrt{x/g}}\\{(k,ac\Delta)=1}}}
\frac{\mu \big (\frac{ck}{(ck,h)}\big )}{cg\Delta k^2}\frac{\varphi (ck\Delta)}{\varphi (ck/(ck,h))}\\
&\qquad{}+O\Big (\sum_{\substack{{g\mid\Delta}\\
{(g,ac)=1}}}\mu^2(g)\sum_{\substack{{k\le\sqrt{x/g}}\\{(k,a\Delta)=1}}}(k(k,h))^{1/2+\varepsilon}\Big ).
\end{split}
\end{equation*}
In the previous formula we could insert the condition $(c,k)=1$ in the main term, because the fact that $(h,c)=1$ means that $\mu (ck/(ck,h))=0$
if $(c,k)>1$.
The error term is $O(x^{3/4+\varepsilon})$.
Let $T_1(x)$ denote  the main term. By the coprimality conditions among $a,c,\Delta,k$,  we get
\begin{equation*}
T_1(x)=x \sum_{\substack{{g\mid\Delta}\\{(g,ac)=1}}}\frac{\mu ^2(g)\mu \big ( \frac{c}{(c,h)}\big )}{cg\Delta}
\frac{\varphi (c\Delta)}{\varphi \big (\frac{c}{(c,h)}\big )}\sum_{\substack{{k\le\sqrt{x/g}}\\{(k,ac\Delta)=1}}}
\frac{\mu \big (\frac{k}{(k,h)}\big )}{k^2}\frac{\varphi (k)}{\varphi \big (\frac{k}{(k,h)}\big )}.
\end{equation*}
Let $\lambda_h$ denote the function in the inner sum over $k$:
\begin{equation*}
\lambda_h(k)=\frac{\mu \big (\frac{k}{(k,h)}\big )}{k^2}\frac{\varphi (k)}{\varphi \big (\frac{k}{(k,h)}\big )}.
\end{equation*}
Note that $\lambda_h(k)$ is a multiplicative function of~$k$.
When $p\nmid h$,  we have 
$\lambda _h (p)=-1/p^2$ and $\lambda_h (p^\nu)=0$ for $\nu\ge 2$.
When $p\mid h$, if $v_p(h)$ is the $p$-adic valuation of~$h$, we have:
\begin{equation*}
\lambda_h(p^\nu)=\begin{cases}\frac{\varphi (p^\nu)}{p^{2\nu}} & \text{if }\nu\le v_p (h)\\
-\frac{1}{p^{\nu +1}} & \text{if } \nu = v_p (h)+1\\
0 & \text{if }\nu\ge v_p (h)+2;
\end{cases}
\end{equation*}
in particular, $|\lambda_h(p^\nu)| \le p^{v_p(h)}/p^{2\nu}$ always, which implies that $|\lambda_h(n)| \le h/n^2 \ll 1/n^2$ since $h$ is fixed. We deduce that 
\begin{equation*}
\sum_{\substack{{k\le\sqrt{x/g}}\\{(k,ac\Delta)=1}}}\lambda_h(k)=\sum_{(k,ac\Delta)=1}
\lambda_h(k)+O(hx^{-1/2+\varepsilon}),
\end{equation*}
from which we obtain
$T_1(x)=C_1(h,a,c,\Delta)x$
with 
\begin{equation}\label{C1}
\begin{split}
C_1 (h,a,c,\Delta)&=\frac{6}{\pi ^2}\frac{\mu\big (\frac{c}{(c,h)}\big )\varphi (c\Delta)}{c\Delta \varphi \big (\frac{c}{(c,h)}\big )}\prod_{\substack{{ p\mid \Delta}\\ {(p, ac)=1}}}\left ( 1+\frac{1}{p}\right )\\
&\prod_{p\mid ahc\Delta}\Big ( 1-\frac{1}{p^2}\Big )^{-1}\prod_{\substack{{p\mid h}\\{(p,ac\Delta)=1}}}
\Big ( 1+\frac{1}{p}-\frac{2}{p^{v_p(h)+1}}\Big ).\\
\end{split}
\end{equation}
In the particular case $h=1$, this gives 
\begin{equation*}
S_1(x)=\frac{6x}{\pi ^2}\frac{\mu (c)}{c}\prod_{p\mid ac}\Big ( 1-\frac{1}{p^2}\Big )^{-1}+O(x^{3/4+\varepsilon}).
\end{equation*}
In this last computation, we have used the fact that if $p\mid (c,\Delta)$, then $p\mid a$.
In the same way, we derive the asymptotic formula
\begin{equation}\label{C2}
S_2(x)=C_1(h,c,a,\Delta)x+O(x^{3/4+\varepsilon})
\end{equation}
which, when $h=1$, simplies to
\begin{equation*}
S_2(x)=\frac{6x}{\pi ^2}\frac{\mu (a)}{a}\prod_{p\mid ac}\Big ( 1-\frac{1}{p^2}\Big )^{-1}+O(x^{3/4+\varepsilon}).
\end{equation*}
This finishes the proof of Theorem \ref{1+1} in the case $(h,ac\Delta)=1$.

\subsection{The case $(h,ac\Delta)\ne1$}\label{1+1General}

The main difference when dealing with the case $(h,ac\Delta)\ne1$ is that we lose some cancellation observed in the proof 
of Lemma~\ref{gDelta}. We use the notation $m\mid n^\infty$ to indicate that $p\mid m\Rightarrow p\mid n$, and we sort the summands according to the prime factors they share with $(h,ac\Delta)$:
\begin{equation}\label{hacD}
S(f,x,h)=\sum_{\delta\mid (h,ac\Delta)^\infty}\sum_{\substack{{n\le x/\delta}\\ {(n,(h,ac\Delta))=1}}}
\sum_{\substack{{0\le r<\delta n}\\ {f(r)\equiv 0\Mod{\delta n}}}}\e\Big (\frac{hr}{\delta n}\Big ).
\end{equation}
Since $\deg f=2$, the number of roots of $f(r)\equiv 0\Mod{\delta n}$ is less than $2^{\omega (\delta n)}$.
Defining $B=x^{1/5-\varepsilon}$ for some sufficiently small $\varepsilon >0$, we split this sum as $S(f,x,h)=S_>(x,h)+S_\le(x,h)$, where $\delta >B$ in $S_>(x,h)$ and $\delta \le B$ in $S_\le(x,h)$.
When $\delta$ is large, a trivial upper bound is sufficient:
\begin{equation}\label{>}
\begin{split}
S_>(x,h)&\ll x\log x\sum_{\substack{{\delta \mid (h,ac\Delta)^\infty}\\{\delta >B}}}\frac{2^{\omega (\delta)}}{\delta}\\
&\ll x\log x\sum_{\substack{{\delta \mid (h,ac\Delta)^\infty}}}\frac{\delta^{\varepsilon_1}}{\delta} \bigg( \frac\delta B\bigg)^{1-2\varepsilon_1} = \frac{x\log x}{B^{1-2\varepsilon_1}}\prod_{p\mid (h,ac\Delta)}\Big ( 1-\frac{1}{p^{\varepsilon_1}}\Big )^{-1}.\\
\end{split}
\end{equation}

When $\delta$ is not large, we adapt the method of the previous section.
Since $(n,\delta)=1$, we have by the Chinese remainder theorem:
\begin{equation*}\label{<}
S_\le(x,h)=\sum_{\substack{{\delta\mid (h,ac\Delta)^\infty}\\{\delta\le B}}}\sum_{\substack{{n\le x/\delta}\\ {(n,(h,ac\Delta))=1}}} \bigg( \sum_{\substack{{0\le r_0<\delta}\\{ f(r_0n)\equiv 0\Mod\delta}}}e\Big (\frac{hr_0}{\delta}\Big) \bigg) \bigg( \sum_{\substack{{0\le r_1<n}\\{ f(r_1\delta)\equiv 0\Mod n}}}e\Big (\frac{hr_1}{n}\Big ) \bigg).
\end{equation*}
In order to suppress the dependence of $n$ in the summation in $r_0$,
we split this sum according to $n$ modulo~$\delta$:
\begin{equation*}\begin{split}
S_\le(x,h)&=\sum_{\substack{{\delta\mid (h,ac\Delta)^\infty}\\{\delta\le B}}}\sum_{\substack{{0\le \alpha <\delta}\\{(\alpha ,\delta )=1}}} \bigg(
\sum_{\substack{{0\le r_0<\delta}\\{ f(r_0\alpha)\equiv 0\Mod\delta}}}e\Big (\frac{hr_0}{\delta}\Big) \bigg)\\
&\qquad{}\times \bigg(
\sum_{\substack{{n\le x/\delta}\\ {(n,(h,ac\Delta))=1}\\ {n\equiv\alpha\Mod\delta}}}
\sum_{\substack{{0\le r_1<n}\\{ f(r_1\delta)\equiv 0\Mod n}}}e\Big (\frac{hr_1}{n}\Big) \bigg).\\
\end{split}
\end{equation*}
Let $G(\alpha,\delta)$ denote the above sum over $r_0$ and $H(\alpha,\delta)$ the double sum over $n$ and $r_1$, so that 
\begin{equation}\label{GHk}
S_\le(x,h)=\sum_{\substack{{\delta\mid (h,ac\Delta)^\infty}\\{\delta\le B}}}\sum_{\substack{{0\le \alpha <\delta}\\{(\alpha ,\delta )=1}}}G(\alpha ,\delta)H(\alpha,\delta).
\end{equation}
It is convenient to notice now that the number of summands in the sum defining $G(\alpha ,\delta)$ is at most the number of roots of $f(r)\Mod\delta$, and hence that sum is bounded by a constant depending on $f$: indeed, a bound for that number of roots by Sitar and the second author~\cite[Lemma 3.4]{MS11} implies that
\begin{equation*}
|G(\alpha ,\delta)|\le \Delta^{1/2}2^{\omega(\delta)} \le \Delta^{1/2}2^{\omega(ac\Delta)}.
\end{equation*}

We handle the term $H(\alpha ,\delta)$ in the same way as in the previous section. We write $\Delta=\Delta_1\Delta_2$ with $(\Delta_1,h)=1$ and $\Delta_2\mid (h,\Delta)^\infty$.
A version of Lemma~\ref{gDelta} with $\Delta_1$ in place of $\Delta$ gives
\begin{equation*}
H(\alpha ,\delta)=\sum_{\substack{{g\mid\Delta_1}\\{(g,ac)=1}\\}}\mu^2(g)\sum_{\substack{{m\le x/g\delta}\\ {(m,\Delta)=1}\\ {gm\equiv\alpha\Mod \delta}}}\sum_{\substack{{0\le r_1<mg}\\ {f(r_1\delta)\equiv 0\Mod {mg}}}}\e\Big (\frac{hr_1}{mg}\Big ).
\end{equation*}

The corresponding sum $S_1(x)$ is then of type 
\begin{equation*}\begin{split}
S_1(x)&=\sum_{\substack{{g\mid\Delta_1}\\{(g,ac)=1}}}\mu ^2(g)
\sum_{\substack{{k\le\sqrt{x/\delta g}}\\{(k,a\Delta)=1}}}\sum_{\substack{{\ell\le x/gk\delta}\\{(\ell, k)=(\ell ,c\Delta)=1}\\{
gk\ell\equiv\alpha\Mod\delta}}}
\e\Big (\frac{(-bcg\bar a_k\bar g_k+d+ckr_g\bar k_g)h\bar\ell_{cgk}}{cgk}\Big )\\
&\qquad{}+O((\log x)^2).
\end{split}
\end{equation*}
Since $(\alpha ,\delta)=1$, the condition $gk\ell\equiv \alpha\Mod\delta$ implies that $(g,\delta)=1$.
The analogue of equation~\eqref{sigmal} is now
\begin{equation*}
\Sigma_1:=\sum_{\substack{{\ell\le x/gk}\\{(\ell ,ck\Delta)=1}\\{gk\ell\equiv\alpha\Mod\delta}}}\e\Big (\frac{\frac{\Delta_1}{g}(-bcg\bar a_k\bar g_k+d+ckr_g\bar k_g)h\bar \ell_{ck\Delta}}{ck\Delta_1}\Big ),
\end{equation*}
in which the only real difference from before is the congruence $gk\ell\equiv\alpha\Mod\delta$. 
 Let $\delta_2$ be the least common multiple $\delta_2=[\delta ,\Delta_2]$; we still have
$(\delta_2 , ck\Delta_1)=1$. The two conditions $g\ell k\equiv\alpha\Mod\delta$ and 
$(\ell ,\Delta_2)=1$ can be expressed using congruences modulo~$\delta _2$:
\begin{equation*}
\Sigma_1=\sum_{\substack{{0\le \beta <\delta_2/\delta}\\{(\alpha \bar k_\delta +\delta \beta,\delta_2)=1}}}
\sum_{\substack{{\ell\le x/gk}\\{(\ell ,ck\Delta_1)=1}\\{\ell\equiv\alpha\bar k_\delta+\beta\delta\Mod{\delta_2}}}}\e\Big (\frac{\frac{\Delta_1}{g}(-bcg\bar a_k\bar g_k+d+ckr_g\bar k_g)h\bar \ell_{ck\Delta}}{ck\Delta_1}\Big ).
\end{equation*}

We apply Lemma~\ref{IncSumBis} in the same way as in the case $(\Delta, h)=1$ to obtain:
\begin{equation*}
S_\le(x,h)=xC(h,a,c,\Delta_1)\sum_{\substack{{\delta\mid (h,ac\Delta)^\infty}\\{\delta\le B}}}\sum_{\substack{{0\le \alpha <\delta}\\{(\alpha ,\delta )=1}}} \frac{G(\alpha ,\delta)}{\delta ^2}+O\Big (x^{3/4 +\varepsilon}\sum_{\substack{{\delta\mid (h,ac\Delta)^\infty}\\{\delta\le B}}}\delta^{1/4}\Big ).
\end{equation*}
If we open the sum $G(\alpha ,\delta)$ and exchange the order of summation with $\alpha$, we find Ramanujan sums:
\begin{equation*}\begin{split}
\sum_{\substack{{0\le \alpha <\delta}\\{(\alpha ,\delta )=1}}} G(\alpha ,\delta)&=
\sum_{\substack{{0\le r_0<\delta}\\ {f(r_0)\equiv 0\Mod \delta}}}\sum_{\substack{{0\le \alpha <\delta}\\{(\alpha ,\delta )=1}}}
\e\Big (\frac{hr_0\bar\alpha}{\delta}\Big )\\
& = \sum_{\substack{{0\le r_0<\delta}\\ {f(r_0)\equiv 0\Mod \delta}}}\mu\Big ( \frac{\delta}
{(hr_0,\delta)}\Big )\frac{\varphi (\delta)}{\varphi\Big (\frac{\delta}{(hr_0,\delta)}\Big )}.
\end{split}
\end{equation*}

Since $G(\alpha ,\delta ) \ll 1$ as previously remarked, the sum over $\delta$ in the main term converges as $B$ tends to $\infty$ (note that the sum is not over all integers $\delta$ but rather only those integers with prime factors in a fixed finite set, which is a very sparse sequence), and the error resulting from replacing the finite sum over~$\delta$ with the infinite series is $\ll x/B^{1-\varepsilon}$. In the error term,
the $\delta ^{1/4}$ roughly comes from the summation of the $k^{1/2}$ with $k\le (x/g\delta )^{1/2}$
and with a trivial summation of the sum over~$\alpha$.
This error term is $O(x^{3/4+\varepsilon}B^{1/4})$. 

This ends the proof of Theorem \ref{1+1}, with
\begin{equation} \label{C1 general}
C(f,h) = C(h,a,c,\Delta_1)\sum_{\substack{{\delta\mid (h,ac\Delta)^\infty}}} \frac1{\delta ^2} \sum_{\substack{{0\le r_0<\delta}\\ {f(r_0)\equiv 0\Mod \delta}}}\mu\Big ( \frac{\delta}
{(hr_0,\delta)}\Big )\frac{\varphi (\delta)}{\varphi\Big (\frac{\delta}{(hr_0,\delta)}\Big )},
\end{equation} 
where $C(h,a,c,\Delta_1)$ is as in equation~\eqref{C1}.

\section{Linear times an irreducible quadratic (Theorem~\ref{1+2})} \label{1+2 sec}

In this section we prove Theorem \ref{1+2} concerning the polynomial $f(t)=t(t^2+1)$.

\subsection{First step: splitting $S(f,x)$}

Since $(n,n^2+1)=1$ we can write $S(f,x)$ as a sort of convolution, as in the previous section. The following lemma is elementary:

\begin{lemma}\label{kl}
Let $g(t)$ be  a polynomial with integer coefficients with $g(0)=\pm 1$, and let $m$ be a positive integer. The roots of $rg(r)\equiv 0\Mod m$ are in one-to-one correspondence with the factorizations $k\ell =m$
with $(k,\ell )=1$ and corresponding roots $v$ of $g(v)\equiv 0\Mod m$. The root $r$ corresponds to the solution modulo $m$ of the system of congruences
\begin{equation*}
\begin{split}
r& \equiv 0 \Mod k\\
r&\equiv v \Mod \ell.\\
\end{split}
\end{equation*}
\end{lemma}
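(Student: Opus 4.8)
The plan is to build the claimed bijection explicitly in both directions and then verify that the two maps are mutually inverse, the single arithmetic input being that $g(0)=\pm1$ forces $r$ and $g(r)$ to be coprime. First I would record this key fact: for every integer $r$ one has $g(r)\equiv g(0)\equiv\pm1\Mod r$, since each monomial of $g(r)-g(0)$ is divisible by $r$; hence $\gcd(r,g(r))=\gcd(r,g(0))=1$. Consequently, for any prime power $p^a$ exactly dividing $m$, the divisibility $p^a\mid rg(r)$ together with the coprimality of $r$ and $g(r)$ forces $p^a$ to divide exactly one of the two factors $r$ and $g(r)$.

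Given a root $r$ of $rg(r)\equiv0\Mod m$, I would therefore define $k$ to be the product of the prime powers $p^a\|m$ with $p^a\mid r$, and set $\ell=m/k$, the product of those with $p^a\mid g(r)$. By construction $k\ell=m$ and $(k,\ell)=1$, while $r\equiv0\Mod k$ and $g(r)\equiv0\Mod\ell$, so that the residue $v:=r\bmod\ell$ is a root of $g$ modulo $\ell$ and $r$ satisfies exactly the stated system of congruences. This is the forward map $r\mapsto(k,\ell,v)$. For the reverse map, given a coprime factorization $k\ell=m$ and a corresponding root $v$ of $g$ modulo $\ell$, the Chinese remainder theorem produces a unique residue $r\Mod m$ with $r\equiv0\Mod k$ and $r\equiv v\Mod\ell$; reducing $rg(r)$ modulo $k$ (where $r\equiv0$) and modulo $\ell$ (where $g(r)\equiv g(v)\equiv0$) and invoking $(k,\ell)=1$ shows $rg(r)\equiv0\Mod m$, so $r$ is indeed a root.

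It remains to check that the two constructions invert each other, and this is where I expect the only genuine subtlety. Passing from $r$ to $(k,\ell,v)$ and back recovers $r$ immediately, since $r$ was defined as the unique solution of precisely that system. In the other direction I must verify that the value of $k$ reconstructed from the produced $r$ agrees with the $k$ I started from: primes dividing $k$ clearly still divide $r$, but I also need that no prime $p\mid\ell$ divides $r$. Here the hypothesis $g(0)=\pm1$ reenters, if $p\mid\ell$ and $p\mid r$, then $p\mid v$ as well, whence $g(v)\equiv g(0)\equiv\pm1\Mod p$, contradicting $g(v)\equiv0\Mod\ell$. Thus $(r,\ell)=1$, the reconstructed factorization matches, and the correspondence is a genuine bijection. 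Everything apart from this last coprimality check is routine Chinese-remainder bookkeeping; the condition $g(0)=\pm1$ is exactly what makes both the clean splitting of the prime powers of $m$ and this final consistency verification go through.
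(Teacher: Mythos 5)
Your proof is correct, and it is exactly the intended elementary argument: the paper gives no proof of this lemma (it is declared ``elementary''), but the analogous Lemma~\ref{rg} is justified by the same device of setting $k=(r,m)$ and $\ell=(g(r),m)$ and checking that this inverts the Chinese-remainder construction, which is precisely what you do. Your observation that $g(r)\equiv g(0)\equiv\pm1\Mod{r}$, hence $(r,g(r))=1$, is the one arithmetic input, and your final consistency check that $(r,\ell)=1$ is the right way to close the bijection.
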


We have by Lemma \ref{kl}
\begin{equation*}
S(f,x)=\sum_{n\le x}\sum_{\substack{{k\ell =n}\\{(k,\ell )=1}}}\sum_{\substack{{0\le v <\ell}\\{v^2+1\equiv 0\Mod\ell}}}\e\Big (\frac{\bar kv}{\ell}\Big ).
\end{equation*}
Let $y_1=x^{1/3}(\log x)^{-A}$ and $y_2=x^{1/3}(\log x)^B$ with $A,B>0$ to be chosen.
We split the sum $S(x)$ into three parts according to the size of~$k$: $S(f,x)=S_1(x)+S_2(x)+S_3(x)$ with
\begin{align*}
S_1(x)&=\sum_{y_2<k\le x}\sum_{\substack{{\ell \le x/k}\\{(k,\ell )=1}}}\sum_{\substack{{0\le v <\ell}\\{v^2+1\equiv 0\Mod\ell}}}\e\Big (\frac{\bar kv}{\ell}\Big ) \\
S_2(x)&=\sum_{y_1<k\le y_2}\sum_{\substack{{\ell \le x/k}\\{(k,\ell )=1}}}\sum_{\substack{{0\le v <\ell}\\{v^2+1\equiv 0\Mod\ell}}}\e\Big (\frac{\bar kv}{\ell}\Big ) \\
S_3(x)&=\sum_{1\le k\le y_1}\sum_{\substack{{\ell \le x/k}\\{(k,\ell )=1}}}\sum_{\substack{{0\le v <\ell}\\{v^2+1\equiv 0\Mod\ell}}}\e\Big (\frac{\bar kv}{\ell}\Big ).
\end{align*}

We shall see momentarily that in $S_1(x)$, it is possible to use equation~\eqref{IncSum eqn} in the same way as in the proof of Theorem \ref{1+1}; the main term in our asymptotic formula for $S(f,x)$ arises from this sum.
This approach works only when $k$ is sufficiently large (or $\ell$ sufficiently small), which is to say when $k$ is slightly bigger than $x^{1/3}$. 
This is our motivation for the choice of~$y_2$.

The converse is true for $S_3(x)$, in which $\ell$ is the largest parameter. In this case we use the fact that the second factor is quadratic. We can 
apply a lemma of Gauss on the correspondence of the roots of $n^2+1\equiv 0\Mod \ell$ and certain representations $\ell =r^2+s^2$ as the sum of two squares. This approach works when $k=o(x^{1/3})$, which is why we choose $y_1$ close to $x^{1/3}$.

The remaining range $k\in {}]y_1,y_2]$ is covered by a direct application of Hooley's result~\cite{H64}. Since $y_1$ and $y_2$ are close together, Hooley's general bound applied to the irreducible polynomial $X^2+1$  is sufficient.

\subsection{The first two sums}

In the sum $S_1(x)$, the variable $k$ is large and thus we arrange for some cancellation in the sum over this variable:
\begin{equation*}
S_1 (x)=\sum_{\ell <x/y_2}\sum_{\substack{{0\le v <\ell}\\{v^2+1\equiv 0\Mod\ell}}}\sum_{\substack{{y_2<k\le x/\ell}\\{(k,\ell )=1}}}\e\Big (\frac{\bar kv}{\ell}\Big ).
\end{equation*}
%
%
Let $\varrho (m)$ denote the number of roots modulo $m$ of the polynomial $n^2+1$:
\begin{equation}\label{rho}
\varrho (m)=\#\{ 0\le  v<m \colon v^2+1\equiv 0\Mod m\}.
\end{equation}
For any $\ell <x/y_2$ and any $0\le v<\ell$ with $v^2+1\equiv 0\Mod \ell$, we apply equation~\eqref{IncSum eqn} 
to bound the inner sum in~$k$:
\begin{equation*}
\begin{split}
S_1 (x)& =\sum_{\ell\le x/y_2}\sum_{\substack{{0\le v <\ell}\\{v^2+1\equiv 0\Mod\ell}}}\Big (\Big (\frac{x/\ell-y_2}{\ell}\Big )\mu (\ell)+O(\sqrt{\ell }\cdot \tau (\ell)\log \ell)\Big )\\
& =x\sum_{\ell\le x/y_2}\frac{\mu (\ell)\varrho (\ell)}{\ell ^2}-y_2\sum_{\ell\le x/y_2}\frac{\mu (\ell)}{\ell}+O\Big (\sum_{\ell\le x/y_2}\sqrt{\ell}\cdot \tau (\ell)\varrho (\ell)\log \ell\Big )\\
&=x\prod_p\Big (1-\frac{\varrho (p)}{p^2}\Big )+O(y_2x^\varepsilon +(x/y_2)^{3/2}(\log x )^5).\\
&=x\prod_p\Big (1-\frac{\varrho (p)}{p^2}\Big )+O(x(\log x)^{5-B}).
\end{split}
\end{equation*}
In the third equality above, we used the calculation
\begin{equation*}
\begin{split}
\sum_{\ell\le x/y_2}\sqrt{\ell} \cdot \tau (\ell)\varrho (\ell)\log \ell&\ll \Big (\frac{x}{y_2}\Big )^{3/2}\log x\sum_{\ell\le x/y_2}\frac{\tau (\ell)\varrho (\ell)}{\ell}\\
&\ll  \Big (\frac{x}{y_2}\Big )^{3/2}\log x\prod_{p\le x/y_2}\Big (1+\sum_{k=1}^\infty \frac{\tau (p^k)\varrho (p^k)}{p^k}\Big ),
\end{split}
\end{equation*}
followed by the fact that $\varrho (p^k)\le 2$ for all prime powers~$p^k$. (We could in fact replace the exponent $5-B$ by $2-3B/2$, using the fact that $\varrho (p^k)=0$ if $p\equiv 3\Mod 4$, but that improvement is not significant for our purposes.)

In the sum $S_2 (x)$, the variable $k$ is in a crucial range (corresponding to when the size of $k$ is close to $\sqrt{\ell}$) where the methods for both $S_1(x)$ and $S_3(x)$ fail.
The bound for $S_2(x)$ will be a direct consequence of the work of Hooley:
\begin{lemma}
Let $P(X)\in\Z [X]$ be an irreducible polynomial of degree $n\ge 2$.
If $hk\not =0$ then we have
\begin{equation*}
\sum_{\substack{{\ell\le x }\\{(\ell ,k)=1}}}\sum_{\substack{{v\Mod \ell}\\{P(v)\equiv 0\Mod \ell}}}\e\Big (\frac{h\bar k v}{\ell}\Big )\ll_{h,P} x\frac{(\log\log x)^{(n^2+1)/2}}{(\log x)^{\delta _n}},
\end{equation*}
where 
$\delta _n=(n-\sqrt n)/{n!}$.
\end{lemma}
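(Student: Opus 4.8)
The plan is to obtain this directly from Hooley's treatment of the irreducible case in~\cite{H64}, of which the present sum is only a mild twist. First I would eliminate the multiplicative inverse. Since $(\ell,k)=1$, the map $v\mapsto w$ determined by $w\equiv\bar k v\Mod\ell$ is a bijection of $\Z/\ell\Z$ that carries the roots of $P(v)\equiv0\Mod\ell$ onto the roots of $P(kw)\equiv0\Mod\ell$ and turns $\e(h\bar k v/\ell)$ into $\e(hw/\ell)$. Hence the sum equals
\[
\sum_{\substack{\ell\le x\\(\ell,k)=1}}\ \sum_{\substack{w\Mod\ell\\ P(kw)\equiv0\Mod\ell}}\e\Big(\frac{hw}\ell\Big),
\]
which is precisely Hooley's sum for the still irreducible, still degree-$n$ polynomial $P(kX)$, restricted to moduli coprime to~$k$. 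The coprimality restriction only deletes terms and is harmless; the delicate point — and the reason one cannot simply invoke the qualitative theorem for $P(kX)$ — is that the implied constant must be independent of~$k$.

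To secure that uniformity I would run Hooley's argument over the fixed number field $K=\mathbb{Q}(\theta)$, $P(\theta)=0$, rather than over the $k$-dependent polynomial. For $\ell$ squarefree and coprime to $k$ and to the discriminant and index attached to~$P$, the roots of $P$ modulo~$\ell$ are in bijection with the integral ideals $\mathfrak a$ of norm $\ell$ that are products of distinct degree-one primes, the root being the residue $\nu(\mathfrak a)\equiv\theta\Mod{\mathfrak a}$; in this language the sum becomes $\sum_{N\mathfrak a\le x}\e\big(h\bar k\,\nu(\mathfrak a)/N\mathfrak a\big)$, and the factor $\bar k$ merely replaces the residue datum $\theta\Mod{\mathfrak a}$ by $\bar k\theta\Mod{\mathfrak a}$, the reduction of $\theta/k\in K$, which is integral at every prime away from~$k$. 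The moduli divisible by a square or by a prime from the fixed bad set are sparse and I would discard them by a trivial bound; estimating the discarded terms is where the mean values of the root-counting function $\varrho$ and of the divisor function $\tau$ enter, and these are the source of the power of $\log\log x$.

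The heart of the matter is then Hooley's cancellation estimate for $\sum_{N\mathfrak a\le x}\e(h\bar k\,\nu(\mathfrak a)/N\mathfrak a)$, which I would establish by regarding each $\mathfrak a$ as a lattice, expressing $\nu(\mathfrak a)/N\mathfrak a$ through the geometry of that lattice, and summing the resulting complete exponential sums of Kloosterman and Ramanujan type, whose Weil-type bounds (as in~\cite{H57}) are uniform over the unit twist $\bar k$ and hence over~$k$. Carrying the parameters of this argument through, rather than stopping at the qualitative estimate $o(x)$, is what yields the explicit shape $x(\log\log x)^{(n^2+1)/2}(\log x)^{-\delta_n}$: the saving $(\log x)^{-\delta_n}$, with $\delta_n=(n-\sqrt n)/n!$, comes from optimising the density of moduli that carry the largest number of roots against a square-root-type cancellation, while the accumulated mean values of $\varrho$ and $\tau$, controlled by Cauchy--Schwarz, furnish the exponent $(n^2+1)/2$, the factor $\tfrac12$ being exactly the square root in that inequality.

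The step I expect to be the genuine obstacle is this uniformity in~$k$: the bald reduction to $P(kX)$ conceals a polynomial whose discriminant and height grow with~$k$, so one must check that every constant in Hooley's argument depends only on the fixed field~$K$ (equivalently on~$P$) and on~$h$, the parameter~$k$ entering solely through the benign additive twist~$\bar k$. This is exactly what permits the later summation over $k\in{}]y_1,y_2]$ in the proof of Theorem~\ref{1+2} to cost only one extra factor of $\log\log x$ — turning $(\log\log x)^{5/2}$ into the $(\log\log x)^{7/2}$ recorded there — using also that $\log(x/k)\asymp\log x$ throughout that range. A secondary, essentially bookkeeping, difficulty is pinning down the precise exponents $\delta_n$ and $(n^2+1)/2$, which are implicit in, but not isolated by, Hooley's original account.
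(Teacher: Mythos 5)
Your proposal takes essentially the same approach as the paper, which offers no independent argument but simply cites Hooley's Theorem~1 of~\cite{H64} for the case $k=1$ and asserts that his proof adapts with no difficulty to give a bound uniform in~$k$, the point being exactly the one you identify: the parameter $\bar k$ enters only as an additive twist of the character (equivalently, replacing the residue datum $\theta$ by $\bar k\theta$), so every constant in Hooley's argument depends only on $P$ and~$h$. Your fleshed-out sketch of that adaptation is a correct elaboration of what the paper leaves implicit.
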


\noindent The case $k=1$ is~\cite[Theorem 1]{H64}; the proof can be adapted with no difficulty for all $k\in\N$ and provides then a result that is uniform in~$k$.
The dependence on~$h$ in this result of Hooley is due only to the appearance of $(h, \ell)$ in certain intermediate computations.

We apply this lemma with $P(X)=X^2+1$ and replacing $x$ by $x/k$:
\begin{equation*}
\begin{split}
S_2 (x)&\ll\sum_{y_1\le k\le y_2}\frac{x}{k}\frac{(\log\log x)^{5/2}}{(\log x)^{1-1/\sqrt2}} \ll x\frac{(\log\log x)^{7/2}}{(\log x)^{1-1/\sqrt2}},
\end{split}
\end{equation*}
using the fact that $\log (y_2/y_1) \ll \log\log x$.
 
\subsection{The sum $S_3(x)$}

In this section we use the special shape of the polynomial $n^2+1$ to find an upper bound for $S_3(x)$.
Following the ideas of the two articles of Hooley \cite{H63,H67} concerning $\tau (n^2+1)$ and $P^+ (n^2+1)$ (the number of divisors of $n^2+1$ and the largest prime factor of $n^2+1$, respectively), we employ the Gauss correspondence between the roots of $v^2+1\equiv 0\Mod \ell$ and certain representations
of $\ell =r^2+s^2$ as the sum of two squares. Indeed for such integers $r,s$ with $(rs,\ell)=1$,
we have $(\bar r s)^2+1\equiv 0\Mod\ell$.
The parameter $\bar k$ in the exponential gives rise to some coprimality problems.
The first author~\cite{Da96} resolved such a difficulty when $k$ is squarefree; in equation~\eqref{k1k2} we will use an elegant formula of Wu and Xi~\cite{WX}
to handle the general case.

As in the proof of Theorem \ref{1+1}, in the following argument the condition  $k_2=(k,r^\infty)$ means that $p\mid k_2\Rightarrow p\mid r$ and $(k/k_2, r)=1$.

\begin{lemma}
\label{vrs}
For $\ell >1$, there is a one-to-one correspondence between the representations of $\ell$  by the form $\ell =r^2+s^2$ with $(r,s)=1$, $r>0,s>0$
and the solutions of the congruence $v^2+1\equiv 0\Mod \ell$.   This bijection is given by:
\begin{equation*}
\frac{v}{\ell}=\frac{\bar s}{r}-\frac{s}{r(r^2+s^2)} \Mod 1.
\end{equation*}
For $k\ge 1$, $k=k_1k_2$ with $k_2=(k,r^\infty)$ we have
\begin{equation}\label{k1k2}
\frac{\bar kv}{\ell }=\frac{-r\overline{k_2(r^2+s^2)}}{k_1s}+\frac{r}{ks(r^2+s^2)}-\frac{r\overline{k_1s(r^2+s^2)}}{k_2}\Mod 1.
\end{equation}
\end{lemma}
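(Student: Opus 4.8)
The plan is to prove the two assertions separately: the Gauss bijection (which is classical) together with its explicit formula, and then the algebraic identity~\eqref{k1k2}, which is where the real work lies.

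First I would record the coprimality facts that drive everything. Since $(r,s)=1$ and $\ell=r^2+s^2$, any common prime factor of $r$ and $\ell$ would divide $s^2$ and hence~$s$; thus $(r,\ell)=(s,\ell)=1$. The existence and bijectivity of the correspondence between primitive representations $\ell=r^2+s^2$ with $r,s>0$ and roots of $v^2+1\equiv0\Mod{\ell}$ is the classical Gauss correspondence coming from factorization in $\Z[i]$: each root $v$ gives $\ell\mid(v+i)(v-i)$, and taking the gcd with $\ell$ in $\Z[i]$ produces a Gaussian integer $r+si$ of norm $\ell$, unique once the unit ambiguity is fixed by $r,s>0$. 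Hence I would only verify that the stated formula produces the correct root. Writing the right-hand side over the common denominator $r\ell$ gives $\frac{v}{\ell}=\frac{\bar s_r\ell-s}{r\ell}$; since $\bar s_r\ell-s\equiv \bar s_r s^2-s\equiv s(\bar s_r s-1)\equiv0\Mod{r}$, the integer $v=(\bar s_r\ell-s)/r$ is well-defined modulo~$\ell$ and satisfies $rv\equiv-s\Mod{\ell}$. Therefore $v\equiv-\bar r_\ell s\Mod{\ell}$, whence $v^2\equiv\bar r_\ell^2 s^2\equiv\bar r_\ell^2(\ell-r^2)\equiv-1\Mod{\ell}$, as required.

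For~\eqref{k1k2} the key preliminary observation is that the naive substitution fails: here $\bar k$ means $\bar k_\ell$, and it cannot be reduced modulo~$r$ or~$s$ to an inverse there, which is precisely the ``coprimality problem'' flagged before the lemma. To circumvent this I would first rewrite the exponential argument entirely modulo~$\ell$. From $rv\equiv-s$ one gets $sv\equiv-\bar r_\ell s^2\equiv r\Mod{\ell}$, so $v\equiv\bar s_\ell r$ and thus $\bar k v\equiv\overline{(ks)}_\ell\,r\Mod{\ell}$, giving $\frac{\bar k v}{\ell}\equiv\frac{r\,\overline{(ks)}_\ell}{\ell}\Mod{1}$. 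Applying the inversion formula~\eqref{inversion formula} to the coprime pair $\ell$ and $ks$ to move $\ell$ out of the denominator yields
\begin{equation*}
\frac{\bar k v}{\ell}\equiv\frac{r}{ks\ell}-\frac{r\,\bar\ell_{ks}}{ks}\Mod{1},
\end{equation*}
which already isolates the middle term $\frac{r}{ks(r^2+s^2)}$ of~\eqref{k1k2}.

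It then remains to split $\frac{r\bar\ell_{ks}}{ks}$ into the two outer terms, using that $ks=(k_1s)\cdot k_2$ is a product of coprime factors: indeed $k_2\mid r^\infty$ together with $(r,s)=1$ forces $(k_2,s)=1$, while $(k_2,k_1)=1$ by the definition $k_2=(k,r^\infty)$, so $(k_1s,k_2)=1$. Applying~\eqref{inversion formula} once more in the form $\frac1{AB}\equiv\frac{\bar B_A}{A}+\frac{\bar A_B}{B}$ with $A=k_1s$ and $B=k_2$, multiplying by the integer $r\bar\ell_{ks}$, and reducing the inverse $\bar\ell_{ks}$ modulo each factor so that $\bar\ell_{ks}\,\overline{k_2}_{k_1s}\equiv\overline{(k_2\ell)}_{k_1s}$ and $\bar\ell_{ks}\,\overline{k_1s}_{k_2}\equiv\overline{(k_1s\ell)}_{k_2}$, gives
\begin{equation*}
\frac{r\bar\ell_{ks}}{ks}\equiv\frac{r\,\overline{(k_2\ell)}_{k_1s}}{k_1s}+\frac{r\,\overline{(k_1s\ell)}_{k_2}}{k_2}\Mod{1},
\end{equation*}
and substituting back produces exactly~\eqref{k1k2} with $\ell=r^2+s^2$. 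The one point demanding care throughout---and the main obstacle---is the bookkeeping of the modulus to which each inverse is taken, together with the verification that every inverse in sight is legitimate (that is, $(ks,\ell)=1$, $(k_2\ell,k_1s)=1$, and $(k_1s\ell,k_2)=1$), all of which reduce to the coprimalities $(k,\ell)=1$, $(r,s)=1$, and the disjoint prime supports of $k_1$ and~$k_2$.
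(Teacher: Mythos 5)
Your proof is correct. It is worth noting that the paper itself offers no proof of this lemma: the bijection and its explicit formula are cited to Smith's \emph{Report on the Theory of Numbers} [Art.~86], and equation~\eqref{k1k2} is cited verbatim as [Lemma~7.4] of Wu--Xi. So you have supplied a self-contained argument where the authors rely on the literature. Your derivation is also pleasantly consistent with the paper's own toolkit: after observing that $sv\equiv r\Mod\ell$ (so that $\bar k_\ell v\equiv r\,\overline{(ks)}_\ell\Mod\ell$), you apply the inversion formula~\eqref{inversion formula} twice --- once to the coprime pair $(ks,\ell)$ to extract the small term $r/(ks\ell)$, and once to the coprime pair $(k_1s,k_2)$ to split the remaining denominator --- which is exactly the mechanism used throughout Sections~\ref{1+1 sec} and~\ref{1+1+1 sec}. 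The coprimality bookkeeping is the only delicate point and you handle it correctly: $(r,\ell)=(s,\ell)=1$ from $\ell=r^2+s^2$ and $(r,s)=1$; $(k_2,s)=1$ because $k_2\mid r^\infty$; and $(k_1,k_2)=1$ because $k_1=k/k_2$ is coprime to $r$ by maximality of $k_2=(k,r^\infty)$. (The hypothesis $(k,\ell)=1$ is implicit in the statement, since $\bar k_\ell$ must exist; in the application to $S_3(x)$ it is guaranteed by the condition $(k,\ell)=1$ in the convolution.) The only part you outsource is the bijectivity of the Gauss correspondence itself, which you correctly attribute to unique factorization in $\Z[i]$ via $\gcd(v+i,\ell)$; since the paper likewise defers this to Smith, that is entirely appropriate.
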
 

\noindent The first part of this lemma is proved in detail in the book of Smith~\cite[Art.\ 86]{Sm65}, while equation~\eqref{k1k2} is \cite[Lemma~7.4]{WX}.

By this lemma, still using the notation $k=k_1k_2$, we have
\begin{equation*}
S_3(x)=\sum_{\substack{{k_1k_2\le y_1}\\{(k_1,k_2)=1}}}\sum_{\substack{{r^2+s^2\le x/(k_1k_2)}\\{(r,s)=1,\, r>0,\, s>0}\\ {(k_1k_2,r^\infty )=k_2}}}
\e\Big ( \frac{-r\overline{k_2(r^2+s^2)}}{k_1s}+\frac{r}{ks(r^2+s^2)}-\frac{r\overline{k_1s(r^2+s^2)}}{k_2}\Big ).
\end{equation*}
First we remove the term $\e \big ( {r}/{ks(r^2+s^2)}\big )$: since
\begin{equation*}
\e \Big ( \frac{r}{ks(r^2+s^2)}\Big )=1+O\Big (\frac{1}{ksr}\Big ),
\end{equation*}
replacing this term by $1$ results in a corresponding error in $S_3 (x)$ that is $O((\log x)^4)$.

Following the notation of several authors,  we denote by $(k_1s)^\flat$ and $(k_1s)^\sharp$ the squarefree and squarefull part, respectively, of $k_1s$.
Since $((k_1s)^\flat , (k_1s)^\sharp )=1$, we can use  the Chinese remainder theorem  as in the proof of Theorem \ref{1+1}:
\begin{equation*}
\frac{1}{k_1s}\equiv\frac{\overline{(k_1s)^\sharp}}{(k_1s)^\flat}+\frac{\overline{(k_1s)^\flat}}{(k_1s)^\sharp} \Mod 1.
\end{equation*}
Inserting this in $S_3(x)$, we obtain
\begin{equation}\label{prodS3}
S_3(x)=\sum_{\substack{{k_1k_2\le y_1}\\{(k_1,k_2)=1}}}\sum_{\substack{{r^2+s^2\le x/(k_1k_2)}\\{(r,s)=1, r>0,s>0}\\ {(k_1k_2,r^\infty )=k_2}}}
K(r)W(r) +O((\log x)^4),
\end{equation}
with
\begin{align*}
K(r)&=\e\Big ( \frac{-r\overline{k_2(r^2+s^2)(k_1s)^\sharp}}{(k_1s)^\flat}\Big ) \\
W(r)&=\e \Big ( \frac{-r\overline{k_2(r^2+s^2)(k_1s)^\flat}}{(k_1s)^\sharp}-\frac{r\overline{k_1s(r^2+s^2)}}{k_2}\Big ).
\end{align*}
Let $S_4(x)$ denote the contribution to $S_3(x)$ of the $k_1,k_2,r,s$ such that $ (k_1s)^\sharp>(\log x)^{45}$ or $k_2>(\log x)^5$,
and $S_5(x)$ the remaining contribution, that is, the contribution of the $k_1,k_2,r,s$ such that
$k_2 \le (\log x)^5$ and $(k_1s)^\sharp\le  (\log x)^{45}$.

First we  prove that 
\begin{equation}\label{S4}
S_4(x)\ll x(\log x)^{-3}.
\end{equation}
We remark that if   $m^2$ is the largest square divisor of $(k_1s)^\sharp$ then $m^2\ge ((k_1s)^\sharp)^{2/3}$.  
We deduce that when $(k_1s)^\sharp >(\log x)^{45}$,  there exists $m>(\log x)^{15}$ such that $m^2\mid (k_1s)^\sharp$.
We can write this divisor in the following way: $m^2=u^2v^2w^2$ with $u^2\mid k_1$, $v^2\mid s$, and $w\mid (k_1,s)$. Thus we have
$\max \{u^2,v^2, w^2\}\ge m^{2/3}$ and  there exists $d>(\log x)^5$ such that $d^2\mid k_1$ or $d^2\mid s$ or $d\mid (k_1,s)$.
In the first case (when $d^2\mid k_1$), the contribution of the $k_1,k_2,r,s$ is less than
\begin{equation*}
\sum_{(\log x)^5<d\le y_1}
\sum_{d^2k_1k_2<y_1}
\sum_{\max \{r,s\}\ll x^{1/2}/(d^2k_1k_2)^{1/2}}1\ll x(\log x)^{-3}.
\end{equation*}
Similarly, in the second case (when $d^2\mid s$), we have a contribution less than
\begin{equation*}
\sum_{k_1k_2<y_1}\sum_{r\ll (x/k_1k_2)^{1/2}}\sum_{(\log x)^5<d}\sum_{s\ll x^{1/2}/(d^2k_1^{1/2}k_2^{1/2})}1\ll x(\log x)^{-3}.
\end{equation*}
Finally the contribution of the terms with $d\mid (k_1,s)$ is less than 
\begin{equation*}
\sum_{d>(\log x)^5}\sum_{dk_1k_2\le y_1}\sum_{\max \{r, ds\}\ll (x/dk_1k_2)^{1/2}}1\ll x(\log x)^{-3}.
\end{equation*}
It remains to evaluate the contribution to $S_4(x)$ of the terms where $k_2>(\log x)^5$.
Since $k_2=(k_1k_2,r^\infty)$, we have $q(k_2)\mid r$ where $q(k_2)=\prod_{p\mid k_2}p$ is the squarefree kernel of $k_2$.
Thus,   the contribution of $r$ is bounded by $x^{1/2}/(q(k_2)(k_1k_2)^{1/2})$, and then the corresponding summation of all the $k_1,k_2,r,s$ with $k_2>(\log x)^5$ is less than
\begin{equation*}
\sum_{(\log x)^5<k_2<y_1}\sum_{k_2k_1<y_1}\frac{x}{k_1k_2q(k_2)}\ll x(\log x)^{-3}\sum_{k_2\ge 1}\frac{1}{q(k_2)k_2^{1/5}}\ll x(\log x)^{-3}.
\end{equation*}

The rest of this section is devoted to the sum $S_5(x)$, which can be written as
\begin{equation*}
S_5(x)=\sum_{\substack{{k_1k_2\le y_1}\\{(k_1,k_2)=1}\\{k_2\le (\log x)^{5}}}}\sum_{\substack{{r^2+s^2\le x/(k_1k_2)}\\{r,s>0, (r,s)=1}\\{q(k_2)\mid r}\\{(k_1,r)=1}\\{(k_1s)^\sharp\le (\log x)^{45}}}}K(r)W(r).
\end{equation*} 
If we replace $r$ by $q(k_2)r'$, the sum over $r'$ has the shape 
\begin{equation*}
S_R=\sum_{r'<R,\, (r',s)=1}K(r'q(k_2))W(r'q(k_2))
\end{equation*}
for some quantity $R=R(s,k_1,k_2)\ll x^{1/2}/(q(k_2)\sqrt{k_1k_2})$.
It is then standard to complete the sum:
\begin{equation*}
S_R=\frac{1}{k_1k_2s}\sum_{h=1}^{k_1k_2s} \bigg( \sum_{\substack{{a=1}\\{(a,s)=1}}}^{k_1k_2s}K(aq(k_2))W(aq(k_2))\e\Big (\frac{ah}{k_1k_2s}\Big) \bigg) \bigg( \sum_{r'<R}\e\Big ({-}\frac{hr'}{k_1k_2s}\Big ) \bigg).
\end{equation*}
As before, the inner sum over $r'$ is geometric and is $\ll \min \big \{ R, \| {h}/{k_1k_2s} \| ^{-1}\big \}$.
Let $S_a$ denote the inner sum over the variable $a$, which is a complete sum. Applying the Chinese remainder theorem many times, we have:
\begin{equation*}
S_a=\prod_{p\mid (k_1s)^\flat}\Big (\sum_{a\Mod p} K_p (a)\e (ha/p)\Big )\prod_{p^\nu\| k_2(k_1s)^\sharp}\Big (\sum_{a\Mod{p^\nu}}W_{p^\nu}(a)\e (ha/p^\nu)\Big ),
\end{equation*}
where
\begin{equation*}
K_p(a)=\e\Big ( \frac{-aq(k_2)\overline{k_2(a^2q(k_2)^2k_2^2(k_1s/p)^2+s^2)}}{p}\Big ).
\end{equation*}
and $W_{p\nu}$ is an exponential term modulo $p^\nu$ whose argument is a similar rational function in~$a$.
Since $k_2(k_1s)^\sharp\le (\log x)^{50}$, a trivial bound for the sums on the $a\Mod{p^\nu}$ when $p^\nu\mid k_2(k_1s)^\sharp$ is sufficient, yielding
\begin{align*}
S_a &\ll \prod_{p\mid (k_1s)^\flat}\Big|\sum_{a\Mod p} K_p (a)\e (ha/p)\Big| \prod_{p^\nu\| k_2(k_1s)^\sharp} p^\nu \\
&\ll (\log x)^{50} \prod_{p\mid (k_1s)^\flat}\Big|\sum_{a\Mod p} K_p (a)\e (ha/p)\Big|.
\end{align*}
(It is in fact possible to find in~\cite[Appendix B]{WX} a useful nontrivial bound for the sums that we have estimated trivially.)

Since $(k_1s)^\flat$  is squarefree, we can apply
Weil's bound for exponential sums of a rational function. The formulation we use is a particular case derived from~\cite[equation (3.5.2)]{De77}.

\begin{lemma}\label{Deligne}
Let $\PP^1(\F_p)$ be the projective line on $\F_p$, and let $f\colon \PP^1(\F_p)\rightarrow \PP^1(\F_p)$ be a nonconstant rational function.
For all $u\in\PP^1(\F_p)$, let $v_u(f)$ be the order of the pole of $f$ at $u$ if $f(u)=\infty$ and $v_u (f)=0$ otherwise.
Then we have 
\begin{equation}\label{v}
\Big |\sum_{\substack{{u\in\PP^1(\F_p)}\\{f(u)\not =\infty}}}\e \Big (\frac{f(u)}{p}\Big )\Big |\le \sum_{v_u(f)\not =0}(1+v_u(f))p^{1/2}
\end{equation}
\end{lemma}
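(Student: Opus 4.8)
The plan is to derive the estimate from Deligne's general bound by realizing the sum as a trace of Frobenius on $\ell$-adic cohomology. Write $\psi(t)=\e(t/p)$ for the nontrivial additive character of $\F_p$, let $U\subset\PP^1$ be the complement of the poles of $f$, and let $\mathcal{L}_\psi(f)$ denote the rank-one lisse Artin--Schreier sheaf on $U$ whose Frobenius trace at an $\F_p$-point $u$ equals $\psi(f(u))=\e(f(u)/p)$. The Grothendieck--Lefschetz trace formula identifies the sum with $\sum_i(-1)^i\Tr\bigl(\mathrm{Frob}\mid H^i_c(U_{\overline{\F_p}},\mathcal{L}_\psi(f))\bigr)$; this identity, together with the weight and dimension bounds below, is precisely the mechanism behind~\cite[(3.5.2)]{De77}. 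Since $U$ is a smooth affine curve and the sheaf is lisse of rank one, $H^0_c$ vanishes, and since $f$ is nonconstant the geometric monodromy is nontrivial, so $H^2_c$ vanishes as well. (In the lone degenerate case $f=g^p-g+c$ the sheaf is geometrically trivial and $H^2_c\ne0$; but then every pole order is divisible by $p$, the right-hand side of~\eqref{v} already exceeds $p^{3/2}$, and the trivial bound by the number of summands suffices.) Thus only $H^1_c$ contributes, and the sum is at most $\dim H^1_c$ times the largest modulus of a Frobenius eigenvalue on that space.

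The two remaining ingredients are a bound on the eigenvalues and a bound on the dimension. For the former I would invoke Deligne's purity theorem: the Artin--Schreier sheaf is pure of weight~$0$, so $H^1_c$ is mixed of weight at most~$1$ and every eigenvalue $\alpha$ satisfies $|\alpha|\le p^{1/2}$. For the latter, using $H^0_c=H^2_c=0$ gives $\dim H^1_c=-\chi_c(U_{\overline{\F_p}},\mathcal{L}_\psi(f))$, and the Euler--Poincaré (Grothendieck--Ogg--Shafarevich) formula on $\PP^1$ yields $\chi_c(U,\mathcal{L}_\psi(f))=\chi_c(U)-\sum_u\mathrm{Swan}_u(\mathcal{L}_\psi(f))$, the sum running over the poles. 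Since $\chi_c(U)=2-r$, where $r$ is the number of poles counted in~\eqref{v}, this gives $\dim H^1_c=r-2+\sum_u\mathrm{Swan}_u$.

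The final local input is that the Swan conductor of an Artin--Schreier sheaf at a pole $u$ equals the pole order $v_u(f)$ when $p\nmid v_u(f)$, and is at most $v_u(f)$ in general, since it can be lowered only by subtracting an Artin--Schreier coboundary $g^p-g$, which never raises a pole order. Hence $\mathrm{Swan}_u\le v_u(f)$, and therefore
\begin{equation*}
\dim H^1_c\le r-2+\sum_{v_u(f)\ne0}v_u(f)\le\sum_{v_u(f)\ne0}\bigl(1+v_u(f)\bigr);
\end{equation*}
multiplying by the eigenvalue bound $p^{1/2}$ produces exactly~\eqref{v}. The step I expect to be the main obstacle is the local ramification bookkeeping: pinning down the Swan conductor at poles whose order is divisible by~$p$ (where the raw pole order overstates the wild ramification) and confirming the vanishing of $H^2_c$ outside the geometrically trivial case. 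Both are subsumed in Deligne's formalism, so in practice the argument collapses to the Euler-characteristic count carried out above.
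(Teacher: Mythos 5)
Your argument is correct, and it is essentially the paper's own route: the paper gives no proof of this lemma but simply cites Deligne~\cite[equation~(3.5.2)]{De77}, and your derivation (trace formula, vanishing of $H^0_c$ and $H^2_c$, purity for the eigenvalues on $H^1_c$, and the Grothendieck--Ogg--Shafarevich count $\dim H^1_c = r-2+\sum_u\mathrm{Swan}_u$ with $\mathrm{Swan}_u\le v_u(f)$) is precisely the standard mechanism behind that citation, including the correct disposal of the degenerate case $f=g^p-g+c$. The only caveat worth recording is that the poles must be counted over $\overline{\F_p}$ (as your Euler-characteristic computation implicitly does), which is how the lemma is used in the paper even though its statement only defines $v_u(f)$ for $u\in\PP^1(\F_p)$.
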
 

Since $K_p(a)e(ha/p)$ has at most $3$ poles (including the pole at~$\infty$), which are simple, we have 
\begin{equation*}
\Big |\sum_{a\Mod p} K_p (a)\e (ha/p)\Big |\le 6\sqrt{p},
\end{equation*}
from which we deduce that 
\begin{equation*}
|S_a|\le 6^{\omega ((k_1s)^\flat)}\sqrt{k_1s}(\log x)^{50}.
\end{equation*}

Returning to $S_R$, we have obtained
\begin{equation*}
S_R\ll \frac{R}{k_1k_2s} 6^{\omega ((k_1s)^\flat)}\sqrt{k_1s}(\log x)^{50}+ 6^{\omega ((k_1s)^\flat)}\sqrt{k_1s}(\log x)^{51},
\end{equation*}
which gives the following upper bound for $S_5(x)$:
\begin{equation*}\begin{split}
S_5(x)&\ll (\log x)^{51}\sum_{\substack{{k_1k_2\le y_1}\\{k_2\le (\log x)^5}}}\sum_{s\ll \sqrt{x/(k_1k_2)}} 6^{\omega ((k_1s)^\flat)}\Big (\frac{\sqrt{x}}{q(k_2)\sqrt{k_1k_2s}}+\sqrt{k_1s}\Big )\\
& \ll x^{3/4}y_1^{3/4}(\log x)^{63}.\\
\end{split}
\end{equation*}
If we take $y_1=x^{1/3}(\log x)^{-100}$ we obtain $S_5\ll x(\log x)^{-12}$, which is enough for the proof of Theorem \ref{1+2}.

\section{Three linear factors (Theorem~\ref{1+1+1})} \label{1+1+1 sec}

In this section we consider one of the simplest cases of a product of three linear factors, namely the case $f(n)=n(n+1)(2n+1)$.
Since $(n,(n+1)(2n+1))=(n+1, 2n+1)=1$, our exponential sum is now
\begin{align*}
S(f,x)&=\sum_{n\le x}\sum_{\substack{{r\Mod n}\\{f(r)\equiv 0\Mod n}}}\e\Big (\frac{r}{n}\Big ) \\
&=\sum_{n_1n_2n_3\le x}\sum_{\substack{{r\Mod {n_1n_2n_3}}\\{n_1\mid r}\\{r+1\equiv 0\Mod {n_2}}\\{2r+1\equiv 0\Mod {n_3}}}}\e\Big (\frac{r}{n_1n_2n_3}\Big );
\end{align*}
note that the inner sum has one term when $n_1$, $n_2$, and $n_3$ are pairwise coprime and $n_3$ is odd, and no terms otherwise.

Let $y_2=x^{1/3}(\log x)^B$ with $B>0$ to be specified.
As in the previous sections we split the sum $S(f,x)$, writing
$
S(f,x)=\sum_{i=1}^4S_i(x)
$
where
\begin{align*}
S_1(x) &= \sum_{\substack{n_1n_2n_3\le x \\ n_1 > y_2}} \sum_{\substack{{r\Mod {n_1n_2n_3}}\\{n_1\mid r}\\{r+1\equiv 0\Mod {n_2}}\\{2r+1\equiv 0\Mod {n_3}}}}\e\Big (\frac{r}{n_1n_2n_3}\Big ) \\
S_2(x) &= \sum_{\substack{n_1n_2n_3\le x \\ n_1 \le y_2 \\ n_2 > y_2}} \sum_{\substack{{r\Mod {n_1n_2n_3}}\\{n_1\mid r}\\{r+1\equiv 0\Mod {n_2}}\\{2r+1\equiv 0\Mod {n_3}}}}\e\Big (\frac{r}{n_1n_2n_3}\Big )\\
S_3(x) &= \sum_{\substack{n_1n_2n_3\le x \\ n_1,n_2 \le y_2 \\ n_3 > y_2}} \sum_{\substack{{r\Mod {n_1n_2n_3}}\\{n_1\mid r}\\{r+1\equiv 0\Mod {n_2}}\\{2r+1\equiv 0\Mod {n_3}}}}\e\Big (\frac{r}{n_1n_2n_3}\Big ) \\
S_4(x) &= \sum_{\substack{n_1n_2n_3\le x \\ n_1, n_2, n_3 \le y_2}} \sum_{\substack{{r\Mod {n_1n_2n_3}}\\{n_1\mid r}\\{r+1\equiv 0\Mod {n_2}}\\{2r+1\equiv 0\Mod {n_3}}}}\e\Big (\frac{r}{n_1n_2n_3}\Big ).
\end{align*}

\subsection{The first three sums}

Using a method similar to Section 2 above, the solution $r$ of the congruences in the above sums can be written as 
\begin{equation*}
r=n_1\big({-}\overline{(n_1n_3)}_{n_2}n_3-\overline{(2n_1n_2)}_{n_3}n_2\big),
\end{equation*}
and therefore the exponential summand in the $S_i(x)$ becomes
\begin{equation}\label{expr}
\e\Big (\frac{r}{n_1n_2n_3}\Big )=\e\Big (\frac{-\overline{n_1n_3}}{n_2}-\frac{\overline{2n_1n_2}}{n_3}\Big ).
\end{equation}

For $S_1(x)$ this gives:
\begin{equation*}
S_1(x)=\sum_{\substack{{n_2n_3\le x/y_2}\\{(2n_2,n_3)=1}}}\sum_{y_2<n_1\le x/(n_2n_3)}\e\Big (\frac{\bar n_1 (n_3\overline{(n_3)}_{n_2}-n_2\overline{(2n_2)}_{n_3})}{n_2n_3}\Big ).
\end{equation*}
We apply equation~\eqref{IncSum eqn} with $t=n_3\overline{(n_3)}_{n_2}-n_2\overline{(2n_2)}_{n_3}$. In this case $(t,n_2n_3)=1$ and we obtain:
\begin{equation*}
S_1(x)=\sum_{\substack{{n_2n_3\le x/y_2}\\{(2n_2,n_3)=1}}}\bigg (\Big (\frac{x/(n_2n_3)-y_2}{n_2n_3}\Big )\mu (n_2n_3)+O(\sqrt{n_2n_3} \cdot \tau (n_2n_3)\log x)\bigg ).
\end{equation*}
The error term is $O\big((x/y_2)^{3/2}(\log x)^5\big)$ which is sufficiently small if $B$ is large enough, and therefore
\begin{equation*}\begin{split}
S_1(x)&=x\sum_{(2n_2,n_3)=1}\frac{\mu (n_2)\mu (n_3)}{n_2^2n_3^2}+O (y_2x^\varepsilon) + O\big((x/y_2)^{3/2}(\log x)^5\big) \\
&=x\frac{6}{\pi ^2}\sum_{n_2}\frac{\mu (n_2)}{n_2^2}\prod_{p|2n_2}\Big (1-\frac{1}{p^2}\Big )^{-1}+O\big((x/y_2)^{3/2}(\log x)^5\big)\\
&=x\frac{6}{\pi ^2}\prod_{p\ge 3}\Big (1-\frac{1}{p^2-1}\Big )+O\big((x/y_2)^{3/2}(\log x)^5\big).
\end{split}
\end{equation*}

We handle the sum $S_2(x)$ in the same way, but this time summing first over $n_2$ instead of~$n_1$.
Applying the inversion formula~\eqref{inversion formula},
we can rewrite equation~\eqref{expr} in the following way:
\begin{equation*}
\begin{split}
\e\Big (\frac{r}{n_1n_2n_3}\Big )&=\e\Big (\frac{\bar n_2}{n_1n_3}-\frac{1}{n_1n_2n_3}-\frac{\overline{2n_1n_2}}{n_3}\Big )\\
&=\e\Big (\frac{\bar n_2(1-\overline{(2n_1)}_{n_3}n_1)}{n_1n_3}\Big )+O\Big (\frac{1}{n_1n_2n_3}\Big ).
\end{split}
\end{equation*}
The error term $O(1/n_1n_2n_3)$ yields a contribution to $S_2(x)$ that is less than $O((\log x)^3)$.
Then we apply equation~\eqref{IncSum eqn}:
\begin{equation*}
S_2(x)=\sum_{\substack{{n_1n_3\le x/y_2}\\{n_1\le y_2}\\{(2n_1,n_3)=1}}}\bigg (\Big (\frac{x}{n_1n_3}-y_2\Big )\frac{\mu (n_1n_3)}{n_1n_3}+O(\sqrt{n_1n_3}\cdot \tau (n_1n_3)\log x) \bigg).
\end{equation*}
We finish in the same way as for~$S_1(x)$, obtaining the same asymptotic formula.

For $S_3(x)$ the corresponding method is to write 
\begin{equation*}
\e\Big (\frac{r}{n_1n_2n_3}\Big )=\e \Big ( \frac{\bar n_3(1-\overline{(n_1)}_{n_2}2n_1)}{2n_1n_2}\Big )+O\Big (\frac{1}{n_1n_2n_3}\Big ),\
\end{equation*}
\bibliographystyle{plain}
and then after applying equation~\eqref{IncSum eqn}
\begin{equation*}
S_3(x)=\sum_{\substack{{n_1n_2\le x/y_2}\\{\max \{n_1,n_2\}\le y_2}\\{(n_1,n_2)=1}}}\Big (\Big (\frac{x}{n_1n_2}-y_2\Big )\frac{\mu (2n_1n_2)}{2n_1n_2}+O(\sqrt{n_1n_2}\cdot \tau (n_1n_2)\log x)\Big ).
\end{equation*}
The corresponding main term this time is
\begin{equation*}
S_3(x)=x\sum_{(n_1,n_2)=1}\frac{\mu (2n_1n_2)}{2n_1^2n_2^2}=-\frac{4x}{\pi ^2}\prod_{p\ge 3}\Big (1-\frac{1}{p^2-1}\Big )+O\big((x/y_2)^{3/2}(\log x)^5\big).
\end{equation*}
Summing these contributions of $S_1(x)$, $S_2(x)$, and $S_3(x)$ in the decomposition at the start of this section, we deduce that
\begin{align*}
S(f,x) &= \frac{8x}{\pi ^2}\prod_{p\ge 3}\Big (1-\frac{1}{p^2-1}\Big )+O\big( |S_4(x)| + (x/y_2)^{3/2}(\log x)^5\big) \\
&= x\prod_{p\ge 3}\Big (1-\frac{2}{p^2}\Big )+O\big( |S_4(x)| + x/(\log x)^{3B/2-5} \big).
\end{align*}

\subsection{The sum $S_4(x)$}\label{1+1+1S4}

It remains to handle $S_4 (x)$. Let $y_1=x^{1/3}(\log x)^{-A}$ with $A>0$ to be specified. Let $I$ denote the interval $I=[y_1,y_2]$.
First we remark that the number of summands for which $\min \{n_1,n_2,n_3\}<y_1$ is $\ll y_1y_2^2$, and hence
\begin{equation*}
S_4(x)=\sum_{\substack{{n_1n_2n_3\le x}\\{n_1,n_2,n_3\in I}}}\sum_{\substack{{r\Mod {n_1n_2n_3}}\\{n_1\mid r}\\{r+1\equiv 0\Mod{n_2}}\\{2r+1\equiv 0\Mod{n_3}}}}\e\Big (\frac{r}{n_1n_2n_3}\Big )+O\Big (
\frac{x}{(\log x)^{A-2B}}\Big ).
\end{equation*}
 We introduce a new parameter $z=\exp (\log x/(10\log\log x))$. We now write $n_2=a_2b_2$, $n_3=a_3b_3$ with $P^+ (a_2a_3)\le z<P^-(b_2b_3)$ where $P^+ (n)$ and $P^-(n)$ are, respectively, the largest and smallest prime factors of~$n$.
Using two more parameters $v$ and $w$, we split $S_4(x)$ as 
 $S_4(x)=S_5(x)+S_6(x)+S_7(x)+O(x(\log x)^{2B-A})$,
 where $\max(a_2,a_3)\le v$ in $S_5(x)$, $\max(a_2,a_3) >w$ in $S_6(x)$, and finally $v<\max\{a_2,a_3\}\le w$ in $S_7 (x)$.
 
In $S_5(x)$, since $a_2,a_3$ are small we have:
 \begin{equation}\label{S5}\begin{split}
 S_5(x)&\le\sum_{a_2, a_3\le v}\sum_{n_1,a_2b_2\in I}\sum_{b_3\le x/(n_1n_2a_3)}1\\
 &\ll \frac{x}{\log z}\sum_{a_2, a_3\le v}\sum_{n_1,a_2b_2\in I}\frac{1}{n_1n_2a_3} \ll \frac{x(\log v)^2 (\log\log x)^2}{(\log z)^2}.\\
 \end{split}
 \end{equation}
 The $(\log z)^2$ above comes from the sieving conditions on $b_2$ and $b_3$. In particular, we have used the following inequality 
 \begin{equation}\label{invSifted}
 \sum_{y_1/a_2\le b_2\le y_2/a_2}\frac{1}{b_2}\ll \frac{\log\log x}{\log z},
 \end{equation}
 which can be derived by partial summation from~\cite[Proposition 1]{BBDT12}.
 The bound \eqref{S5} is sufficiently small when $(\log v)(\log\log x )=o(\log z)$ (we will eventually choose $v$ to be a power of $\log x$).
 We remark that this step is the main obstacle to having an upper bound less than $x/(\log x)^2$ in the error term in Theorem~\ref{1+1+1}.

For $S_6(x)$, we estimate each summand trivially by~$1$; therefore we may assume that $a_3>w$ is abnormally large (the bound when $a_2>w$ is exactly the same) and that $n_2$ is unrestricted. Following some ideas of Hooley~\cite{H67}, we note that if $a_3>w$, then either $\omega (a_3)\ge \log w/ (2\log z)$, or else there exists $d>w^{1/4}$ such that $d^2 \mid  a_3$. We therefore have (ignoring here the condition that $b_3$ has no small prime factors)
 \begin{equation}\begin{split}\label{S6}
 S_6(x)&\le\sum_{n_1,n_2\in I}\sum_{w^{1/4}<d<y_2}\sum_{b_3\le x/(n_1n_2d^2)}1+ \sum_{n_1,n_2\in I}\sum_{\substack{{a_3<y_2}\\{\omega (a_3)\ge (\log w)/2\log z}}}\sum_{b_3\le x/ (n_1n_2a_3)}1\\
 &\ll \sum_{n_1,n_2\in I}\sum_{d>w^{1/4}}\frac{x}{n_1n_2d^2}+\sum_{n_1,n_2\in I}\sum_{a_3\le y_2}\frac{x 2^{\omega (a_3)- \log w / (2\log z)}}{n_1n_2a_3}\\
 &\ll \frac{x (\log\log x)^2}{w^{1/4}}+ x2^{-\log w/(2\log z)}(\log\log x)^2(\log x)^2.\\
 \end{split}
 \end{equation}
 
 It remains to handle the term $S_7 (x)$. We begin in the same way as for $S_1 (x)$:
 \begin{equation}  \label{S7 start}
 S_7 (x)=\sum_{\substack{a_2b_2,a_3b_3\in I \\ P^+ (a_2a_3)\le z<P^-(b_2b_3) \\ v<\max(a_2,a_3)\le w}} \sum_{n_1\le x/(n_2a_3b_3)}\e\Big (\frac{\bar n_1 (a_3b_3\overline{(a_3b_3)}_{n_2}-n_2\overline{(2n_2)}_{a_3b_3})}{n_2a_3b_3}\Big ).
 \end{equation}
Unfortunately, equation~\eqref{IncSum eqn} is not sufficient here. Since $a_3$ is not too small and  
 $b_3$ is not too big, the denominator has three factors not too small and we can apply the recent work of Wu and Xi  \cite{WX} on the $q$-analog of the van der Corput method.
 Such an approach was initiated by Heath--Brown \cite{HB78} and  developed by
 Graham and Ringrose \cite{GR89}, and more recently by  Irving~\cite{Ir14,Ir15} and by Wu and Xi~\cite{WX}, where the arithmetic exponent pairs are obtained when the denominator has
 good factorization properties.
As in the proof of Theorem \ref{1+2}, we denote by $n^\sharp$ and $n^\flat$ the squarefull and squarefree parts of the integer $n$; and we write 
\begin{equation*}
\e\Big (\frac{\bar n_1 (a_3b_3\overline{(a_3b_3)}_{n_2}-n_2\overline{(2n_2)}_{a_3b_3})}{n_2a_3b_3}\Big )=K(n_1)W(n_1),
\end{equation*}
 with 
 \begin{equation*}\begin{split}
 K(n_1)&=\e\Big (\frac{\bar n_1\overline{(n_2a_3b_3)^\sharp} [a_3b_3\overline{(a_3b_3)}_{n_2}-n_2\overline{(2n_2)}_{a_3b_3}]}{(n_2a_3b_3)^\flat}\Big ),\\
 W(n_1)&=\e\Big (\frac{\bar n_1\overline{(n_2a_3b_3)^\flat} [a_3b_3\overline{(a_3b_3)}_{n_2}-n_2\overline{(2n_2)}_{a_3b_3}]}{(n_2a_3b_3)^\sharp}\Big ).\\
 \end{split}
 \end{equation*}
 
\begin{lemma}\label{exp}
Uniformly for any integers $\alpha, A,N,\delta\in\N$,  $q=q_1q_2q_3$   squarefree integer such that $(\alpha\delta ,q)=1$ and any rational function $R$ with integer coefficients,
we have
\begin{equation*}\begin{split}
\sum_{\substack{{A<n\le A+N}\\{(n,q)=1}}}\e \Big (\frac{\alpha\bar n}{q}+\frac{R(n)}{\delta}\Big )&\ll N^{1/2}q_3^{1/2}+N^{3/4}q_2^{1/4}\\
&\qquad{}+N^{3/4}q_1^{1/8}\delta ^{1/4}3^{\omega (q_1)}
\Big ( \frac{N}{q_1}+\log (q_1\delta )\Big )^{1/4}.\\
\end{split}
\end{equation*}
\end{lemma}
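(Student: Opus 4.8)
The plan is to prove this estimate by the $q$-analogue of the van der Corput method (the method of arithmetic exponent pairs) of Heath--Brown~\cite{HB78}, Graham--Ringrose~\cite{GR89}, and Wu--Xi~\cite{WX}; the statement is exactly of the shape established in~\cite{WX}, and the three summands on the right-hand side will be produced by applying a Weyl-differencing step (``process $A$'') to the three coprime blocks $q_3$, $q_2$, $q_1$ of the modulus a successively larger number of times. Throughout, the auxiliary phase $R(n)/\delta$ is merely carried along: since $(\delta,q)=1$ it never interferes with the Kloosterman phase $\e(\alpha\bar n/q)$, and it plays no active role until the very last completion step. The fundamental input is that, because $q$ is squarefree and $(\alpha,q)=1$, the complete sums attached to $\e(\alpha\bar n/q)$ modulo each prime $p\mid q$ are Kloosterman sums with square-root cancellation, which after differencing remain sums of a rational function and can be estimated by Weil's bound in the form of Lemma~\ref{Deligne}.

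The engine is a single differencing inequality. Writing $q=q'q''$ with $(q',q'')=1$ and choosing a shift range $1\le H\le N/q''$, I would shift $n\mapsto n+hq''$ for $0\le h<H$ and apply Cauchy--Schwarz in the outer variable to obtain
\begin{equation*}
\Big|\sum_{\substack{A<n\le A+N\\(n,q)=1}}\e\Big(\frac{\alpha\bar n}{q}+\frac{R(n)}{\delta}\Big)\Big|^2\ll\frac{N^2}{H}+\frac{N}{H}\sum_{1\le h<H}\Big|\sum_{n}\e\Big(\frac{\alpha\big(\overline{n+hq''}-\bar n\big)}{q}+\frac{R(n+hq'')-R(n)}{\delta}\Big)\Big|.
\end{equation*}
The point of shifting by multiples of $q''$ is that $\overline{n+hq''}\equiv\bar n\Mod{q''}$, so the $q''$-component of the Kloosterman phase cancels in the difference and the resulting phase is again a rational function of $n$ but now with reduced denominator $q'$. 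The diagonal term $N^2/H$ contributes the powers of $N$, while for each off-diagonal inner sum I would complete modulo $q'$ (respectively modulo $q'\delta$ at the final stage) and invoke Lemma~\ref{Deligne}, which supplies a factor $\ll\prod_{p\mid q'}p^{1/2}$ together with small divisor-type constants; since the relevant rational function has at most three poles, these constants accumulate to the factor $3^{\omega(q_1)}$ recorded in the statement.

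Carrying this out so that the Weil savings fall successively on the three blocks -- completing modulo $q_3$ after one pass, after differencing by a multiple of $q_1q_2$, yields the first term $N^{1/2}q_3^{1/2}$; iterating once more to land the cancellation on $q_2$ yields $N^{3/4}q_2^{1/4}$; and a final completion modulo $q_1\delta$, which is the only stage that sees the perturbation $R/\delta$, yields the third term, with the extra factors $\delta^{1/4}$, $3^{\omega(q_1)}$, and the completion length $(N/q_1+\log(q_1\delta))^{1/4}$. The exponents $\tfrac12,\tfrac34,\tfrac34$ and $q_3^{1/2},q_2^{1/4},q_1^{1/8}$ are then pinned by optimizing the differencing ranges $H$ at each stage against the diagonal contributions. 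The step I expect to be the main obstacle is not this exponent bookkeeping but the control of \emph{degenerate frequencies}: for certain $h$ (and certain completion frequencies) the differenced rational function becomes constant modulo a prime $p\mid q'$, so Lemma~\ref{Deligne} gives no saving and those terms must be isolated and bounded trivially. Estimating their number while simultaneously tracking the $\delta$-dependence and verifying that the perturbation $R$ never raises the pole order beyond what Lemma~\ref{Deligne} can absorb is the delicate part; this is precisely the role of the careful factorization arguments and appendix estimates of~\cite{WX}, which I would follow.
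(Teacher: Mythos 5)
Your proposal follows essentially the same route as the paper's proof: two applications of the Wu--Xi $A$-process (Lemma~\ref{A-process}), differencing by multiples of $q_3$ and then of $q_2$, followed by completion of the remaining short sum modulo $q_1\delta$ and Weil's bound in the form of Lemma~\ref{Deligne}, with the degenerate shifts --- those $\ell_2,\ell_3$ for which the differenced rational function degenerates modulo some $p\mid q_1$ --- absorbed into a factor $(q_1,\ell_2\ell_3)^{1/2}$ and the divisor estimate~\eqref{l2}, exactly the obstacle you single out. One correction to your narrative, though: in the paper the terms $N^{1/2}q_3^{1/2}$ and $N^{3/4}q_2^{1/4}$ are purely the diagonal contributions $N^2L_3^{-1}$ and $N^2L_2^{-1}$ of the two differencing steps (with $L_i=[N/q_i]$), so no completion or Weil bound is ever applied modulo $q_3$ or $q_2$; your variant --- differencing by multiples of $q_1q_2$ and completing modulo $q_3$ --- would require $q_1q_2\le N$ and would produce a completed-sum bound of the shape $q_3^{1/2}(1+N/q_3)$ rather than $N^{1/2}q_3^{1/2}$, whereas the single Weil input in the actual argument is the one applied to the primes dividing $q_1$ at the last stage.
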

\noindent
We emphasize that the implicit constant above is absolute, and in particular does not depend on~$R$: we handle the contribution of this term $R(n)/\delta$ quite trivially. In our application, the denominator $\delta$ is small, so the prospects for cancellation are modest in any case.

We prove Lemma~\ref{exp} in the next section; assuming the lemma for the moment, we can complete the proof of Theorem~\ref{1+1+1}.
 We apply Lemma~\ref{exp} to the inner sum in equation~\eqref{S7 start} with $q=n_2^\flat b_3^\flat a_3^\flat$ 
 and $\delta =n_2^\sharp b_3^\sharp a_3^\sharp$, where $N=x/(n_2a_3b_3)$. After doing so, by positivity we may again assume that $v<a_3\le w$ (the bound when $v<a_2\le w$ is exactly the same) and ignore all restrictions upon~$n_2$. We obtain 
 \begin{multline*}
 S_7 (x)\ll \sum_{n_2\in I}\sum_{\substack{{a_3b_3\in I}\\{v<a_3\le w}}}\Big\{ \sqrt{\frac{x}{n_2a_3b_3}}\sqrt{a_3^\flat}+\Big (\frac{x}{n_2a_3b_3}\Big )^{3/4}(b_3^\flat )^{1/4}\\
+\Big (\frac{x}{n_2a_3b_3}\Big )^{3/4}(n_2^\flat )^{1/8}(n_2^\sharp a_3^\sharp b_3^\sharp)^{1/4}3^{\omega (n_2^\flat )}\Big (\frac{x}{n_2n_2^\flat a_3b_3}+\log x\Big )^{1/4}\Big\}.
 \end{multline*} 
 We now have to compute all the different sums:
 \begin{equation}\label{S7}
 S_7(x)\ll \sqrt{x}y_2\sqrt{w}(\log z)^{-1}+ (xy_2)^{3/4}v^{-1/4}+x^{23/24 +\varepsilon}.
 \end{equation}
 Theorem~\ref{1+1+1} now follows from the estimates~\eqref{S5}, \eqref{S6}, and~\eqref{S7} upon taking $w=x^{1/24}$, $v=(\log x)^{4B}$, $B=10$, and $A=30$, for example.

\section{Short exponential sums} \label{ses sec}
\subsection{Proof of Lemma~\ref{exp}} \label{ses1 sec}
 In this section we prove Lemma \ref{exp}, which will complete the proof of Theorem~\ref{1+1+1}.
 This lemma is in fact a variant of a particular case of a result of Wu and Xi~\cite[Theorem 3.1 and Proposition 3.2]{WX}.
While we do not need to introduce significant new ideas, the results of \cite{WX} cannot be applied directly in our context
 because we need a more precise version of the function $N^\varepsilon$ in our error bounds. 
Careful attention to their paper reveals that it is possible to adapt some arguments to replace
this $N^\varepsilon$ by a quantity of the type $C^{\omega (q_1)} (\log N)^\alpha$.
In many circumstances such a refinement is not necessary, but for us it is important due to the very restricted range of the factor~$a_3$.
 
 For brevity we will write $J={}]A,A+N]$, $W(n)=\e (R(n)/\delta)$, and
 \begin{equation*}
 E(J)=\sum_{\substack{{n\in J}}}\e \Big (\frac{\alpha  \bar n }{q}\Big )W(n)
 \end{equation*}
 for the sum to be estimated.
We begin by remarking that we may assume that $q_2<N$ and $q_3<N$, for otherwise the lemma is trivial.
For any function $\Psi$ and any $h\in\Z$ we define 
\begin{equation*}
\Delta_h (\Psi)(n)=\Psi (x)\overline{\Psi (x+h)}.
\end{equation*}

 \begin{lemma}\label{A-process}
 Let $q=q_1q_2$ with $(q_1,q_2)=1$, $J={}]A,A+N]$ an interval and $\Psi _i\colon \Z/q_i\Z\rightarrow \C$.
 Then for $1\le L\le N/q_2$, we have
 \begin{equation*}
 \Big |\sum_{n\in J}\Psi_1 (n)\Psi _2 (n)\Big |^2\ll \| \Psi _2\| _\infty\Big (  L^{-1}N^2+L^{-1}N\sum_{0<|\ell |\le L}\Big |\sum_{\substack{{n\in J}\\{n+\ell q_2\in J}}}
 \Delta_{\ell q_2}(\Psi _1)(n)\Big |\Big ).
 \end{equation*}
 \end{lemma}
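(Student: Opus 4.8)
The plan is to run the $q$-analogue of the van der Corput \emph{A}-process, i.e.\ Weyl differencing along the period of $\Psi_2$. The decisive structural fact is that $\Psi_2$ is invariant under the shift $n\mapsto n+q_2$, so translating the summation variable by multiples of $q_2$ leaves the $\Psi_2$-factor untouched while merely shifting the argument of $\Psi_1$; this is exactly what lets us difference away $\Psi_2$ and reduce to autocorrelations of $\Psi_1$.

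First I would set $F(n)=\Psi_1(n)\Psi_2(n)\mathbf{1}_{n\in J}$, so that $S=\sum_n F(n)$. For each $0\le j<L$, translation invariance of a sum over all of $\Z$ gives $\sum_n F(n+jq_2)=S$, and averaging over $j$ yields $LS=\sum_n\sum_{0\le j<L}F(n+jq_2)$. Using $\Psi_2(n+jq_2)=\Psi_2(n)$, I rewrite this as $LS=\sum_n\Psi_2(n)\,G(n)$ with $G(n)=\sum_{0\le j<L}\Psi_1(n+jq_2)\mathbf{1}_{n+jq_2\in J}$, where $n$ ranges over the integers for which at least one shifted point lies in $J$. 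The hypothesis $L\le N/q_2$ bounds the length of this range by $N+(L-1)q_2\ll N$, which is where that assumption is consumed.

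Next I would apply Cauchy--Schwarz in $n$. The $\Psi_2$-factor contributes $\sum_n|\Psi_2(n)|^2\ll N\,\|\Psi_2\|_\infty^2$, leaving the mean square $\sum_n|G(n)|^2$. Expanding this into diagonal ($j_1=j_2$) and off-diagonal ($j_1\ne j_2$) parts and substituting $m=n+j_2q_2$, each term depends on $j_1,j_2$ only through $\ell=j_1-j_2$, appearing with multiplicity $L-|\ell|\le L$. The diagonal gives $\le L\sum_{m\in J}|\Psi_1(m)|^2\ll LN$ (using $\|\Psi_1\|_\infty\ll1$, as holds in the application where $\Psi_1$ is unimodular), which produces the term $L^{-1}N^2$ after dividing by $L^2$. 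The off-diagonal assembles precisely into $L\sum_{0<|\ell|\le L}\big|\sum_{m\in J,\,m+\ell q_2\in J}\Delta_{\ell q_2}(\Psi_1)(m)\big|$, matching $\Delta_h(\Psi)(n)=\Psi(n)\overline{\Psi(n+h)}$ up to a complex conjugation that does not affect the modulus. Dividing $|LS|^2$ by $L^2$ then gives the stated inequality.

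The argument is essentially bookkeeping, so the only genuine points of care are the two combinatorial inputs: controlling the range of the outer variable $n$ (length $\ll N$, via $Lq_2\le N$) and the multiplicity $L-|\ell|\le L$ in the off-diagonal, so that all implied constants remain absolute. It should be emphasized that periodicity is used \emph{only} for $\Psi_2$, never for $\Psi_1$, which is exactly why a single differencing step suffices and why the operator $\Delta$ lands on $\Psi_1$. Finally, Cauchy--Schwarz literally produces $\|\Psi_2\|_\infty^2$; this coincides with the $\|\Psi_2\|_\infty$ written in the statement in the intended setting where $\Psi_2$ is unimodular, so no separate work is needed there.
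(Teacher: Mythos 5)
The paper does not actually prove this lemma: it is quoted from the work of Wu and Xi (the text says it ``follows from the proof of \cite[Lemma~3.1]{WX}''), so there is no in-paper argument to compare against. Your proof is the standard $q$-analogue of the van der Corput $A$-process (shift by multiples of the period $q_2$, average over $L$ shifts, Cauchy--Schwarz, expand the square), and it is correct: the range of the outer variable has length $N+(L-1)q_2\le 2N$ by the hypothesis $Lq_2\le N$, each difference $\ell=j_1-j_2$ occurs with multiplicity $L-|\ell|\le L$, and the conjugation discrepancy between $\Psi_1(m+\ell q_2)\overline{\Psi_1(m)}$ and $\Delta_{\ell q_2}(\Psi_1)(m)=\Psi_1(m)\overline{\Psi_1(m+\ell q_2)}$ is absorbed by the absolute value (or by pairing $\ell$ with $-\ell$). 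The two caveats you flag are genuine features of the statement rather than gaps in your argument: the diagonal term $\ell=0$ contributes $L\sum_{m\in J}|\Psi_1(m)|^2$, so absorbing it into $L^{-1}N^2$ requires $\|\Psi_1\|_\infty\ll1$, a hypothesis not written in the lemma; and Cauchy--Schwarz honestly produces $\|\Psi_2\|_\infty^2$ rather than $\|\Psi_2\|_\infty$. Both discrepancies are harmless where the lemma is applied (in the proof of Lemma~\ref{exp}, $\psi_1$ and $\psi_2$ are products of additive characters and indicator functions, hence bounded by $1$), but it is worth recording that the clean statement should either assume $|\Psi_1|,|\Psi_2|\le1$ or carry the factors $\|\Psi_1\|_\infty^2\|\Psi_2\|_\infty^2$. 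One further minor observation, which you implicitly make: the coprimality $(q_1,q_2)=1$ plays no role in this lemma --- only the $q_2$-periodicity of $\Psi_2$ is used --- and is needed only downstream when the character modulo $q$ is factored.
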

 \noindent
 This formula, which Wu and Xi call an $A$-process by analogy with the $A$-process of the classical van der Corput method, follows from the proof of~\cite[Lemma~3.1]{WX}. 
 
 We apply this lemma with 
 \begin{equation*}
 \psi _1 (n)=\e\Big (\frac{\alpha\bar q_3\bar n}{q_1q_2}\Big )W(n),
 \quad
 \psi_2 (n)=\e\Big (\frac{\alpha\overline{q_1q_2}\bar n}{q_3}\Big ).
 \end{equation*}
 Writing $L_3=[N/q_3]$ (which is at least~$1$), this gives (see also the beginning of the proof of~\cite[Theorem~3.1]{WX})
 \begin{equation} \label{E2}
 |E(J)|^2\ll L_3^{-1}N^2+L_3^{-1}N\sum_{0<|\ell _3|\le L_3}|U(\ell _3)|,
 \end{equation}
 with 
 \begin{equation*}
 U(\ell _3 )=\sum_{n\in J(\ell _3)}\e\Big (\frac{\alpha \bar q_3(\bar n -\overline{(n+\ell _3q_3)})}{q_1q_2}\Big )W(n)\overline{W(n+\ell _3q_3)}
 \end{equation*}
 where $J(\ell _3)$ is some interval contained in~$J$. Note that we may write $U(\ell_3) = \sum_{n\in J} \Psi_3(n)\Psi_4(n)$ where
\begin{equation} \label{psi3 psi4}
\begin{split}
\psi_3(n) &= \e\Big (\frac{\alpha \bar q_2\bar q_3(\bar n -\overline{(n+\ell _3q_3)})}{q_1}\Big )W(n)\overline{W(n+\ell _3q_3)} {\bf 1}_{J(\ell _3)}(n), \\
\psi_4(n) &= \e\Big (\frac{\alpha \bar q_1\bar q_3(\bar n -\overline{(n+\ell _3q_3)})}{q_2}\Big ),
\end{split}
\end{equation}
where ${\bf 1}_{J(\ell _3)}$ is the indicator function of $J(\ell _3)$.
We again apply  Lemma \ref{A-process} to each $U(\ell _3)$, writing $L_2=[N/q_2]$ (which again is at least~$1$); we have written $\psi_3(n)$ as in equation~\eqref{psi3 psi4} so as to make this choice of $L_2$ valid even when the length of $J(\ell _3)$ is much smaller than~$N$.
 We obtain  
 \begin{equation} \label{U2}
 |U(\ell _3)|^2\ll L_2^{-1}N^2+L_2^{-1}N\sum_{0<|\ell _2|\le L_2}|U(\ell _2,\ell _3)|,
 \end{equation}
 with now 
 \begin{equation*}
 U(\ell _2,\ell _3)=\sum_{n\in J(\ell _2,\ell _3)}\e \Big (\frac{F(n)}{q_1}\Big )\widetilde W(n),
 \end{equation*}
 where $J(\ell_2,\ell _3)$ is some interval contained in~$J(\ell _3)$, and 
\begin{align}\label{defF}
F(n) &= \alpha\overline{q_2q_3} \big[ \bar n-\overline{(n+\ell _3q_3)}-\overline{(n+\ell _2q_2)}+\overline{(n+\ell _2q_2+\ell _3q_3)} \,\big], \\
\widetilde W(n) &= W(n)\overline{W(n+\ell_3q_3)}\overline{W(n+\ell _2q_2)}W(n+\ell _2q_2+\ell _3q_3). \notag
\end{align}
 Then we complete the above sum over $n\in J(\ell _2,\ell _3)$:
 \begin{equation*}\begin{split}
 U(\ell _2,\ell _3)&=\frac{1}{q_1\delta}\sum_{a=1}^{q_1\delta}\e \Big (\frac{F(a)}{q_1}\Big )\widetilde W(a)\sum_{h=1}^{q_1\delta}\sum_{n\in J(\ell_2,\ell _3)}\e\Big (\frac{h(a-n)}{q_1\delta}\Big )\\
 &\ll \frac{N}{q_1\delta}\Big |\sum_{a=1}^{q_1\delta}\e \Big (\frac{F(a)}{q_1}\Big )\widetilde W(a)\Big |+\sum_{1\le h<q_1\delta}\frac{1}{h}\sum_{a=1}^{q_1\delta}\e \Big (\frac{F(a)}{q_1}\Big )\widetilde W(a)
 \e \Big (\frac{ha}{q_1\delta}\Big ).
 \end{split}
 \end{equation*}
 We denote by $\Sigma_a (h)$ the inner sum on $a$ in the second term and perform the same manipulations as for the sums $S_a$ in the proof of Theorem~\ref{1+2}, resulting in
 \begin{equation}  \label{lastminute}
 \begin{split}
 \Sigma_a (h)& = \sum_{a=1}^{q_1\delta}\e \Big (\frac{F(a)}{q_1}\Big )\widetilde W(a) \e \Big (\frac{ha}{q_1\delta}\Big ) \\
 &= \sum_{u=1}^\delta \widetilde W(q_1u)\e\Big (\frac{hu}{\delta}\Big )\prod_{p|q_1}\sum_{v=1}^p \e\Big (\frac{F(v\delta q_1/p)+hv}{p}\Big ).
 \end{split}
 \end{equation}
 The function $F$ in equation~\eqref{defF} can be rewritten in the following way, with $\lambda =\alpha \overline{q_2q_3}$:
 \begin{equation*}
 F(n)=\frac{\lambda G(n)}{n(n+\ell _2q_2)(n+\ell _3q_3)(n+\ell _2q_2+\ell _3q_3)},
 \end{equation*}
 where $G(n)$ is a polynomial with constant term $\ell _2q_2\ell _3q_3(\ell _2q_2+\ell _3q_3)$ (the exception being when $p \mid (\ell_2q_2+\ell _3q_3)$, in which case we actually have $F(n)=2\lambda\ell_2q_2\ell _3q_3/(n(n+\ell_2q_2)(n+\ell _3q_3))$).
 If $p \nmid\ell _2\ell _3$, the function $F(v\delta q_1/p)+hv$ of $v$ has at most $5$ poles, each pole being simple (including the pole at $\infty$); this is most clearly seen from the definition~\eqref{defF} of $F(n)$. Then by Lemma \ref{Deligne} we have
 \begin{equation*}
\Big |\sum_{v=1}^p \e\Big (\frac{F(v\delta q_1/p)+hv}{p}\Big )\Big |\le 10\sqrt{p}
\end{equation*}
when $p \nmid\ell _2\ell _3$. We deduce from equation~\eqref{lastminute} that 
\begin{equation*}
|\Sigma_a(h)|\le \sum_{u=1}^\delta \prod_{\substack{p\mid q_1 \\ p\nmid\ell_2\ell_3}} 10\sqrt{p} \prod_{\substack{p\mid q_1 \\ p\mid\ell_2\ell_3}} p \le 10^{\omega (q_1)}\delta\sqrt{q_1}(q_1,\ell _2\ell _3)^{1/2}.
\end{equation*}
Then 
\begin{equation} \label{sum U}
\sum_{\ell_2\le L_2}|U(\ell_2,\ell _3)|\ll \delta\sqrt{q_1}\Big (\log q_1+\frac{N}{q_1}\Big )10^{\omega (q_1)}\sum_{\ell _2\le L_2}(q_1,\ell _2\ell_3)^{1/2}.
\end{equation}
For the sum on $\ell_2$ we have for any $\ell_3\le L_3$:
\begin{equation}\label{l2}
 \sum_{\ell _2\le L_2}(q_1,\ell _2\ell_3)^{1/2}\le \sum_{d\mid q_1}\sqrt{d}(\ell _3,q_1/d)^{1/2}\sum_{\ell_2\le L_2/d}1\ll \tau (q_1)L_2(q_1,\ell _3)^{1/2}.
 \end{equation}
Inserting this bound into equation~\eqref{sum U} and tracing the results back through equations~\eqref{U2} and~\eqref{E2} results~in
\begin{equation*}
E(J)
\ll NL_3^{-1/2}+NL_2^{-1/4}+N^{3/4}\Big (\frac{N}{q_1\delta}+\log (q_1\delta)\Big )^{1/4}q_1^{1/8}\delta^{1/4}10^{\omega (q_1)/4}\tau (q_1)^{3/4} .
\end{equation*}
It remains to replace $L_2$ by $[N/q_2]$ and $L_3$ by $[N/q_3]$, and to observe that 
\begin{equation*}
10^{\omega (q_1)/4}\tau (q_1)^{3/4}=80^{\omega(q_1)/4}\le 3^{\omega (q_1)},
\end{equation*}
to finish the proof of  Lemma \ref{exp}.

\subsection{Generalization of Lemma \ref{exp}, the $A^k$-process} \label{ses2 sec}

In this section, we indicate how to iterate the ideas  of the proof of Lemma~\ref{exp}
to obtain bounds for short exponential sums whose denominator can be decomposed into~$k+1$ factors. This generalization relies on techniques from two important papers of Irving~\cite{Ir14,Ir15}; again, our contribution here consists mainly in replacing a factor of $N^\varepsilon$ by a more precise error term.

Essentially, we would like to apply Lemma~\ref{A-process} consecutively $k$ times.
Irving has given a precise formulation of this iteration; to enounce his result, we need to introduce some notation corresponding  to the iterates of the $\Delta_h (\Psi)$ used in the previous section. 
For any complex-valued function $f$, define
\begin{equation*}
f(n;h_1,\ldots ,h_k)=\prod_{S\subset\{ 1,\ldots ,k\}}f\big (n+\sum_{s\in S}h_s\big )^{\sigma (S)},
\end{equation*} 
where $\sigma (S)$ denotes that the complex conjugate is taken when $\#S$ is odd. With this notation, we may quote~\cite[Lemma 2.2]{Ir15}:

\begin{lemma}\label{IrvingkA}
Let $k\in\N$ and $q_0,\dots,q_k\in\N$. For each $0\le i\le k$, let $f_i \colon \Z\rightarrow\C$ be a function with period $q_i$ such that $f_i(n)\ll 1$. Set $q=q_0\cdots q_k$ and $f(n)=\prod_{i=0}^k f_i(n)$. If $I$ is any interval of length at most $N$, then
\begin{multline}\label{iter}
\Big | \sum_{n\in I}f(n)\Big |^{2^k}\ll_k \sum_{j=1}^k N^{{2^k}-2^{k-j}}q_{k-j+1}^{2^{k-j}} \\
+N^{2^k-k-1} \frac q{q_0} \sum_{0<|h_1|\le N/q_1}\cdots\sum_{0<|h_k|\le N/q_k}\Big |\sum_{n\in I(h_1,\ldots ,h_k)}
f_0(n;q_1h_1,\ldots,q_kh_k)\Big |,
\end{multline}
where $I(h_1,\ldots ,h_k)$ is a subinterval of $I$.
\end{lemma}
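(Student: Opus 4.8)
The plan is to prove the lemma by induction on $k$, realizing the $A^k$-process as $k$ consecutive applications of a single van der Corput step. The base case $k=0$ is an identity: the empty diagonal sum $\sum_{j=1}^0$ vanishes, the prefactor $N^{2^0-0-1}q/q_0$ equals $1$, and $f_0(n;)=f_0(n)$, so both sides equal $\big|\sum_{n\in I}f_0(n)\big|$. For the inductive step I would assume the statement for $k-1$ and deduce it for $k$. Before starting I would dispose of the degenerate ranges: if $q_{k-j+1}>N$ for some $1\le j\le k$, then the corresponding diagonal term already exceeds $N^{2^k-2^{k-j}}N^{2^{k-j}}=N^{2^k}\ge\big|\sum_{n\in I}f(n)\big|^{2^k}$ (since $f\ll1$ gives $\big|\sum_{n\in I}f(n)\big|\ll N$), so the bound is trivial; hence I may assume every $q_i\le N$, which guarantees $L_i:=\lfloor N/q_i\rfloor\ge1$ at each stage.

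For the inductive step, write $S=\sum_{n\in I}f(n)$ and apply one $A$-process to peel off the factor $f_k$. Here I would invoke the differencing mechanism underlying Lemma~\ref{A-process} in its coprimality-free form, since the coprimality hypothesis there is not used in the differencing itself, only the periodicity of the absorbed factor. Taking $f_k$ (period $q_k$, size $\ll1$) as the function to be smoothed, $\prod_{i=0}^{k-1}f_i$ as the remaining factor, and $L_k=\lfloor N/q_k\rfloor$, and using $f_k(n)\overline{f_k(n+q_kh_k)}=|f_k(n)|^2\ll1$, this yields
\begin{equation*}
|S|^2\ll\frac{N^2}{L_k}+\frac{N}{L_k}\sum_{0<|h_k|\le L_k}\Big|\sum_{n\in I(h_k)}\prod_{i=0}^{k-1}\widetilde f_i(n)\Big|,\qquad \widetilde f_i(n):=f_i(n)\overline{f_i(n+q_kh_k)},
\end{equation*}
on a subinterval $I(h_k)\subseteq I$. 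The crucial observation is that each $\widetilde f_i$ still has period $q_i$ and satisfies $\widetilde f_i\ll1$, so the inner sum is exactly of the form to which the induction hypothesis (with $k-1$, moduli $q_0,\ldots,q_{k-1}$, functions $\widetilde f_0,\ldots,\widetilde f_{k-1}$, interval of length $\le N$) applies. Moreover the $(k-1)$-fold difference $\widetilde f_0(n;q_1h_1,\ldots,q_{k-1}h_{k-1})$ produced there composes with the outer difference $\widetilde f_0=\Delta_{q_kh_k}(f_0)$ to give precisely the $k$-fold difference $f_0(n;q_1h_1,\ldots,q_kh_k)$.

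It remains to combine the exponents. I would raise the displayed inequality to the power $2^{k-1}$, using $(a+b)^m\ll_m a^m+b^m$ together with the power-mean bound $\big(\sum_{h_k}|B(h_k)|\big)^{2^{k-1}}\le(2L_k)^{2^{k-1}-1}\sum_{h_k}|B(h_k)|^{2^{k-1}}$ to move the power inside the $h_k$-sum, and then insert the induction hypothesis for each $|B(h_k)|^{2^{k-1}}$. The term $(N^2/L_k)^{2^{k-1}}\asymp N^{2^k-2^{k-1}}q_k^{2^{k-1}}$ supplies the $j=1$ diagonal term; the diagonal terms coming from the induction hypothesis reindex (via $j\mapsto j+1$) to fill in exactly the $j=2,\ldots,k$ summands of $\sum_{j=1}^k N^{2^k-2^{k-j}}q_{k-j+1}^{2^{k-j}}$; and the main term, after collapsing $L_k^{-1}\ll q_k/N$ and using $q_1\cdots q_k=q/q_0$, becomes $N^{2^k-k-1}\tfrac q{q_0}\sum_{0<|h_1|\le N/q_1}\cdots\sum_{0<|h_k|\le N/q_k}\big|\sum_{n}f_0(n;q_1h_1,\ldots,q_kh_k)\big|$, as required. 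The main obstacle is precisely this bookkeeping: keeping the Hölder loss, the rounding in $L_k=\lfloor N/q_k\rfloor$, and the reindexing of the diagonal terms mutually consistent so that the exponents of $N$ and of each $q_{k-j+1}$ land exactly on the stated values; verifying that the nested difference operators compose to $f_0(n;q_1h_1,\ldots,q_kh_k)$ is routine but must be checked carefully.
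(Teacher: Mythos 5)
Your proof is correct, and there is nothing in the paper to compare it against: the authors quote this result directly from Irving \cite[Lemma 2.2]{Ir15} without proof, and your induction on $k$ via iterated $A$-processes is precisely the standard argument (it is also what the paper carries out by hand in the two-step case in the proof of Lemma~\ref{exp}). The details all check out — the base case, the H\"older/power-mean step, the reindexing $j\mapsto j+1$ of the diagonal terms, and the composition of the difference operators — and your observation that the coprimality hypothesis in Lemma~\ref{A-process} is not needed for the differencing itself is both correct and necessary, since Lemma~\ref{IrvingkA} imposes no coprimality on the $q_i$.
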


We also quote the following combinatorial lemma~\cite[Lemma 4.5]{Ir14}:
\begin{lemma}\label{notconstant}
Let $p\ge 3$ be prime and $h_1,\ldots , h_k \in\F_p$. Suppose that for every $b\in\F_p$, the number of subsets $S\subset\{1,\dots,k\}$ with $b = \sum_{s\in S}h_s$ is even. Then some $h_i$ must equal~$0$.
\end{lemma}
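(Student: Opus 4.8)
The plan is to translate the purely combinatorial hypothesis into the vanishing of a product in a field of characteristic~$2$, where the absence of zero divisors forces a single factor to vanish. First I would pass to the algebraic closure $\overline{\F_2}$ and fix an element $\zeta$ of multiplicative order exactly~$p$. Such a $\zeta$ exists precisely because $p$ is odd: the polynomial $X^p-1$ is then separable over $\F_2$ and splits into $p$ distinct roots in $\overline{\F_2}$, of which $p-1$ are primitive $p$-th roots of unity (concretely, $\zeta$ lives in the finite field $\F_{2^d}$, where $d$ is the multiplicative order of $2$ modulo~$p$).

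The engine of the proof is the elementary identity
\begin{equation*}
\prod_{i=1}^k\bigl(1+\zeta^{h_i}\bigr)=\sum_{S\subseteq\{1,\dots,k\}}\zeta^{\sum_{s\in S}h_s}=\sum_{b\in\F_p}N_b\,\zeta^b,
\end{equation*}
where $N_b$ is the number of subsets $S$ with $\sum_{s\in S}h_s=b$, and where each $h_i$ is read as an integer in $\{0,\dots,p-1\}$ (legitimate since $\zeta^p=1$). The first equality is the usual expansion of the product, choosing from each factor either the summand $1$ (omit $i$) or $\zeta^{h_i}$ (include $i$). The hypothesis is exactly that every $N_b$ is even, so every term $N_b\,\zeta^b$ vanishes in characteristic~$2$; hence the right-hand side is $0$, and therefore $\prod_{i=1}^k(1+\zeta^{h_i})=0$.

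Because $\overline{\F_2}$ is a field, some factor must vanish, say $1+\zeta^{h_i}=0$. In characteristic~$2$ this reads $\zeta^{h_i}=1$, and since $\zeta$ has multiplicative order exactly~$p$, this forces $p\mid h_i$, that is, $h_i=0$ in $\F_p$, which is the desired conclusion.

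The only genuine subtlety—the main obstacle, though it is a mild one—is the existence and order of $\zeta$: one must invoke $p\neq 2$ so that the $p$-th roots of unity are pairwise distinct over $\F_2$ and a primitive one is actually available. This is also exactly what makes the hypothesis $p\ge3$ indispensable: for $p=2$ one has $X^2-1=(X-1)^2$ over $\F_2$, there is no primitive square root of unity, and indeed the statement genuinely fails (for example $h_1=h_2=1$ in $\F_2$ gives $N_0=N_1=2$, both even, with no $h_i=0$). Once $\zeta$ is in hand the remaining steps are immediate.
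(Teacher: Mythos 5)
Your proof is correct. Note first that the paper does not actually prove this lemma: it is quoted verbatim from Irving (\cite[Lemma 4.5]{Ir14}) and used as a black box, so there is no in-paper argument to compare against. Your argument --- expanding $\prod_{i=1}^k(1+\zeta^{h_i})=\sum_{b\in\F_p}N_b\,\zeta^b$ at a primitive $p$-th root of unity $\zeta\in\overline{\F_2}$, observing that the evenness hypothesis kills the right-hand side in characteristic~$2$, and then using that a field has no zero divisors to force $\zeta^{h_i}=1$ for some $i$ --- is a complete and self-contained proof, and it is essentially the standard one (Irving's own proof works with $\prod_i(1+x^{h_i})$ in the group algebra $\F_2[\Z/p\Z]$; evaluating at $\zeta$ amounts to projecting onto one of the nontrivial field factors of that semisimple algebra, which is exactly the step that converts ``product is zero'' into ``some factor is zero''). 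You have also correctly identified where $p\ge3$ enters (separability of $X^p-1$ over $\F_2$, hence the existence of an element of order exactly $p$) and your counterexample $h_1=h_2=1$ over $\F_2$ confirms the hypothesis is sharp. No gaps.
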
 

We are now prepared to establish our generalization of Lemma~\ref{exp}.

\begin{lemma}\label{AkB}
Uniformly for any integers $\alpha, A,N$, any positive integers $\delta,k$, any positive integers $q_0,\cdots, q_k$ such that $q=q_0\cdots q_k$ is squarefree and coprime to $\alpha\delta$, and any rational function $R$ with integer coefficients,
\begin{equation*}\begin{split}
\sum_{\substack{{A<n\le A+N}\\{(n,q)=1}}}& \e \Big (\frac{\alpha\bar n}{q}+\frac{R(n)}{\delta}\Big )\ll_k
\sum_{j=1}^k N^{1-1/2^j}q_j^{1/2^j} \\
&+N^{1-1/2^k}q_0^{1/2^{k+1}}\delta^{1/2^k}(2^{k+2}+4)^{\omega(q_0)/2^k}\Big (\Big (\frac{N}{q_0}\Big )^{1/2^k}+(\log q)^{1/2^k}\Big ).
\end{split}
\end{equation*}
\end{lemma}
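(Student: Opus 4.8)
The plan is to iterate the $A$-process (Lemma~\ref{A-process}) exactly $k$ times, in the packaged form of Irving's Lemma~\ref{IrvingkA}, and then to evaluate the resulting complete character sums by Weil's bound (Lemma~\ref{Deligne}), with the combinatorial input Lemma~\ref{notconstant} controlling the degenerate shift tuples. This is the direct generalization of the proof of Lemma~\ref{exp}, which is the case $k=2$ (there $q=q_1q_2q_3$ has three factors, the $A$-process is applied twice, and Weil is used on the distinguished factor).

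First I would set $W(n)=\e(R(n)/\delta)$ and use the Chinese remainder theorem (valid since $q$ is squarefree) to factor $\e(\alpha\bar n/q)=\prod_{i=0}^k f_i(n)$ with $f_i(n)=\e\big(\alpha\,\overline{(q/q_i)}_{q_i}\,\bar n/q_i\big)$. I then apply Lemma~\ref{IrvingkA} to these $f_i$, with its factors indexed so that its $q_i$ is our $q_{k+1-i}$ for $1\le i\le k$ while its distinguished factor is our $q_0$; the weight $W$ is attached to $f_0$ and carried through the differencing exactly as the weight $\widetilde W$ was carried in the passage to equation~\eqref{lastminute}, the only effect being that the final complete sum is taken modulo $q_0\delta$ rather than $q_0$. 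Raising the sum to the power $2^k$, Irving's first family of error terms $\sum_{j=1}^k N^{2^k-2^{k-j}}q_{k-j+1}^{2^{k-j}}$ becomes, upon taking $2^k$-th roots and using subadditivity of $t\mapsto t^{1/2^k}$ together with the chosen relabeling, precisely $\sum_{j=1}^k N^{1-1/2^j}q_j^{1/2^j}$; this part is routine.

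The substance lies in the main term $N^{2^k-k-1}(q/q_0)\sum_{0<|h_i|\le N/q_i}\big|\sum_n f_0(n;q_1h_1,\dots,q_kh_k)\big|$, where $f_0$ now carries $W$. For each fixed tuple I complete the inner sum to a complete sum modulo $q_0\delta$; the $h=0$ term contributes a factor $N/(q_0\delta)$ and the remaining geometric tails sum to $\ll\log(q_0\delta)$ (together responsible for the eventual factor $(N/q_0)^{1/2^k}+(\log q)^{1/2^k}$). I then split by the Chinese remainder theorem into the $\delta$-part, where the finite difference of $W$ lives and is bounded trivially by $\delta$, and the prime parts $p\mid q_0$. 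For each such $p$ the difference $f_0(n;q_1h_1,\dots,q_kh_k)$ is a product of the $2^k$ shifted simple-pole terms $\overline{n+\sum_{s\in S}q_sh_s}$, so as a rational function it has at most $2^k+1$ poles including $\infty$, all simple; when it is nonconstant modulo $p$, Lemma~\ref{Deligne} gives a complete-sum bound $\le(2^{k+1}+2)\sqrt p$. Lemma~\ref{notconstant} supplies the dichotomy (since the $q_s$ are units modulo $p$): the function can degenerate to a constant modulo $p$ only when $p\mid h_i$ for some $i$, and on those ``diagonal'' tuples I use the trivial bound $p$ in place of $\sqrt p$. Summing over tuples, the diagonal contributes a factor $(q_0,h_1\cdots h_k)^{1/2}$ handled by the divisor estimate of equation~\eqref{l2}, costing one further factor of $2$ per prime; collecting the per-prime constant $(2^{k+1}+2)\cdot 2=2^{k+2}+4$, the factors $q_0^{1/2}$ and $\delta$, and taking $2^k$-th roots produces the second term of the claimed bound.

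The main obstacle is exactly this last analysis: proving, uniformly in the shift tuple and independently of $R$, that the iterated finite difference $f_0(n;q_1h_1,\dots,q_kh_k)$ has only simple poles and is nonconstant modulo $p$ off the explicit diagonal, so that Weil's square-root cancellation applies there. This is where Lemma~\ref{notconstant} is indispensable. A secondary but essential difficulty---the very reason the results of \cite{WX,Ir14,Ir15} are not quoted verbatim---is the bookkeeping needed to replace their harmless $N^\varepsilon$ by the explicit $(2^{k+2}+4)^{\omega(q_0)}$ and $(\log q)$ factors: one must track the divisor-type losses through all $k$ applications of the $A$-process rather than absorbing them into $N^\varepsilon$, which is precisely what is needed here because the range of the distinguished factor is so restricted.
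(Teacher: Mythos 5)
Your proposal is correct and follows essentially the same route as the paper: apply Irving's packaged $A^k$-process (Lemma~\ref{IrvingkA}) to the CRT factorization of $\e(\alpha\bar n/q)$ with the weight $W(n)=\e(R(n)/\delta)$ attached to the distinguished factor, complete the resulting differenced sums modulo $q_0\delta$, and evaluate the prime-by-prime complete sums via Lemma~\ref{Deligne}, using Lemma~\ref{notconstant} to confirm nonconstancy off the diagonal $p\mid h_1\cdots h_k$ and the divisor estimate to absorb the $(q_0,h_1\cdots h_k)^{1/2}$ factor. Your bookkeeping of the per-prime constant $(2^{k+1}+2)\cdot 2=2^{k+2}+4$ and of the index relabeling in Irving's first error family matches the paper's computation exactly.
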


\begin{proof}
We apply Lemma~\ref{IrvingkA} with 
\begin{equation*}
f(n)=\e \Big (\frac{\alpha\bar n}{q}\Big )W(n)=\e \Big (\frac{\alpha \bar n\overline{(q/q_0)}}{q_0}+
\frac{R(n)}{\delta}\Big )\prod_{i=1}^k\e \Big (\frac{\alpha \bar n\overline{(q/q_i)}}{q_i}\Big ),
\end{equation*}
where as in the previous section $W(n)=\e (R(n)/\delta)$.
Our desire at this point is to apply Weil's bound to the sums over $n$ that arise from equation~\eqref{iter}.
After the same manipulations as in the proof of Lemma \ref{exp}, the analogue of equation~\eqref{lastminute} is now
\begin{equation}\label{lastsecond}\begin{split}
\Sigma_a (m)&=\sum_{u=1}^\delta W(aq_0; q_1h_1,\ldots ,q_kh_k)\e\Big (\frac{mu}{\delta}\Big )\\
&\times\prod_{p|q_0}
\sum_{v=1}^p\e \Big ( \frac{mv+\sum_{S\subset\{1,\ldots,k\}}(-1)^{\#S}\alpha \overline{(v\delta q_0/p+\sum_{s\in S}h_sq_s)}}{p}\Big ).\\
\end{split}
\end{equation}
In order to apply Weil's bound (Lemma~\ref{Deligne}), we need to confirm that the argument of the exponential is nonconstant modulo~$p$ (even if $p\mid m$).

Note that the numerator in equation~\eqref{lastsecond} can be written as
\begin{multline} \label{lastlastsecond}
mv+\sum_{S\subset\{1,\ldots,k\}}(-1)^{\#S}\alpha \overline{(v\delta q_0/p+\sum_{s\in S}h_sq_s)} \\
= mv+\alpha \sum_{t\Mod p} \overline{(v\delta q_0/p+t)} \sum_{\substack{S\subset\{1,\ldots,k\} \\ \sum_{s\in S}h_sq_s \equiv t\Mod p}} (-1)^{\#S}
\end{multline}
When $p\nmid h_1\cdots h_k$ (so that indeed $p\nmid h_1q_1\cdots h_sq_s$), Lemma~\ref{notconstant} implies that at least one of the inner sums on the right-hand side of equation~\eqref{lastlastsecond} has an odd number of terms, and in particular (by considering its parity) is nonzero. In particular, the numerator in equation~\eqref{lastsecond} is nonconstant, and thus Lemma~\ref{Deligne} can be applied.

The number of poles of this numerator is at most $2^k+1$. When $p \nmid h_1\cdots h_k$, Lemma~\ref{Deligne} implies that the corresponding sum is less than 
$2(2^k+1)\sqrt{p}$.
We deduce that $\Sigma_a(m)\ll \delta (2^{k+1}+2)^{\omega (q_0)}\sqrt{q_0}(q_0,h_1\cdots h_k)^{1/2}$.
The proof concludes with quite similar computations to those in the proof of Lemma~\ref{exp}, with the function $\tau (q_1)$ from equation~\eqref{l2} replaced by $\sum_{d_1\cdots d_k \mid q_0}1/\sqrt{d_1\cdots d_k} \ll 2^{\omega (q_0)}$.
\end{proof}

\section{Product of $k$ linear factors} \label{k sec}
This section is devoted to the proof of Theorem \ref{k-linear}. We will skip some details when the arguments are similar to the previous proofs. All implicit constants in this section may depend on~$f$ and~$h$.

Define $A=\prod_{1\le i<j\le k} (a_ib_j-a_jb_i)\prod_{i=1}^ka_i$.
The first step follows the beginning of the proof of Theorem~\ref{1+1} in Section~\ref{1+1General}. However, since $f$ has more than two linear factors, the discussions related
to the greatest common divisor of $A$, $h$, and the denominators $n$ are more delicate. 
This is why in our splitting analogous of \eqref{hacD}, the summation of $\delta$ will be for 
$\delta \mid (hA)^\infty$ instead of $\delta \mid (h, A)^\infty$.
We keep the notations 
$S(f,x,h)=S_>(x,h)+ S_\le (x,h)$, 
where in $S_>(x,h)$ the parameter $\delta \mid (hA)^\infty$ exceeds $B$, but now with $B=(\log x)^{10k}$ instead of $x^{1/5-\varepsilon}$.
We bound $S_>(x,h)$ by $x(\log x)^{k-1}B^{-1+\varepsilon}$ with $\varepsilon >0$ arbitrarily small as in equation~\eqref{>} (indeed $\varepsilon =1/10$ will suffice for us).

For $S_\le (x,h)$, however, an analogous version of equation~\eqref{GHk} is not sufficient.
This is due to the fact that a trivial summation on $\delta <B$ would bring a factor $B>(\log x)^k$
into our error terms, and we can win only a factor $(\log x)^4$ in an error term (denoted by $T_5^{(1)}(r,\delta)$ later in this section) arising from certain denominators $n$ that are the product of several divisors of the same size. 

Nevertheless, we still begin by applying the Chinese remainder theorem as in Section~\ref{<}:
\begin{equation} \label{how many r}
S_\le (f,x,h)=\sum_{\substack{{\delta \mid (Ah)^\infty}\\ {\delta \le B}}}\sum_{\substack{{0\le r <\delta}\\
{f(r)\equiv 0\Mod\delta}}}T(r,\delta),
\end{equation}
with now 
\begin{equation*}
T(r,\delta )=
\sum_{\substack{{n\le x/\delta}\\ {(n,Ah)=1}}}\sum_{\substack{{0\le r_1<n}\\ {f(r_1\delta)\equiv 0\Mod n}}}\e\Big (\frac {hr_1}{n}+\frac{hr\bar n}{\delta}\Big ).
\end{equation*} 
The number of roots $r$ in equation~\eqref{how many r} is $O(A^{\omega (\delta)})$ (see Nagell~\cite[p.~90, Theorem~54]{Na51} for an even more precise result) and thus $O(1)$ with our conventions for implicit constants. Since our polynomial $f$ is the product of $k$ linear functions, generalizations of Lemma~\ref{rg} and equation~\eqref{inversion formula} allow us to write
\begin{equation*}\begin{split}
T(r ,\delta)
& =
\sum_{\substack{{m_1\cdots m_k\le x/\delta}\\ {(m_1\cdots m_k,Ah)=1}}}
\sum_{\substack{{0\le r_1<m_1\cdots m_k}\\ {m_i \mid f_i(r_1\delta) \, (1\le i\le k)}}}\e \Big (\frac{hr_1}{m_1\cdots m_k}+\frac{hr\overline {m_1\cdots m_k}}{\delta}\Big ),
\end{split}
\end{equation*}
where $(m_i,m_j)=1$ for all $1\le i<j\le k$ and
\begin{equation*}
r_1 \equiv -\bar\delta\sum_{i=1}^k \bar a_ib_i \prod_{\substack{{j=1}\\ {j\not =i}\\ } }m_j\overline{(\prod_{\substack{{j=1}\\ {j\not =i}\\ } }m_j})_{m_i}\Mod{m_1\cdots m_k},
\end{equation*}
so that
\begin{equation}\label{r1}
\e \Big (\frac{hr_1}{m_1\cdots m_k}\Big )=\prod_{i=1}^k \e\Big ( \frac{ -h\bar\delta  \bar a_ib_i
\overline{\prod_{\substack{{j=1}\\ {j\not =i}\\ } }m_j}}{m_i}\Big ).
\end{equation}
We set $y=x^{1/3}(\log x)^{10k}$ and write
$T(r ,\delta)=T_1(r ,\delta)+T_2(r ,\delta)$,
where the sum $T_2(\alpha ,\delta)$ contains precisely those summands for which $\max \{m_1,\ldots , m_k\}> y$.

We decompose $T_2(r ,\delta) =
S_1+\cdots +S_k$ into $k$ sums, where each $S_i$ is defined by the conditions $m_i> y$ and $m_j\le y$ for $j<i$.
In each $S_i$, we use the inversion formula~\eqref{inversion formula} and apply Lemma \ref{IncSumBis} as in the proof of Theorem \ref{1+1+1}. 
This yields an asymptotic formula of the shape
$S_i =xC_i (f,h,\delta)+O((x/y)^{3/2}\sqrt{\delta}(\log x)^k)$ for some constants $C_i (f,h,\delta)$ similar to the constants found in Section~\ref{1+1General} in the proof of Theorem~\ref{1+1}.

These contributions comprise the main term in Theorem~\ref{k-linear} when $k=3$ and $k=4$.
It remains to find an upper bound for $T_1(r ,\delta)$. This upper bound will be $o(x)$ only for $k=3$ and $k=4$, which allows for our asymptotic formula in those cases; for $k\ge 5$ it provides only a nontrivial upper bound as stated in Theorem~\ref{k-linear}.

We split the sum $T_1(r ,\delta)$ into $k!$ subsums
according to the ordering of the~$m_i$.
Let $T_1^{(1)}(r ,\delta)$ denote the subsum of $T_1(r ,\delta)$ with the additional condition that $m_1\ge m_2\ge \cdots \ge m_k$; the estimate we find for this subsum will hold for all $k!$ subsums. We would like to deal with $T_1^{(1)}(r ,\delta)$ simply by applying Lemma~\ref{AkB}; however, this lemma is not efficient enough when $m_3$ is close to $m_1$. Consequently, we make one more splitting 
$T_1^{(1)}(r ,\delta)=T_3^{(1)}(r ,\delta)+T_4^{(1)}(r ,\delta)$, where
$T_3^{(1)}(r,\delta)$ consists of the terms for which $m_3\le m_1 (\log x)^{-k4^k}$.

Concentrating first on $T_3 ^{(1)}(r ,\delta)$, we do dyadic splittings in all $k$ variables, writing
$T_3 ^{(1)}(r,\delta)$ as $O(\log^k x)$ subsums $S(M_1,\ldots ,M_k)$ 
that are restricted to $m_i$ satisfying $M_i<m_i\le 2M_i$ for all $1\le i\le k$; here the bounds 
$M_i$ are powers of~$2$, satisfying
$2^kM_1\cdots M_k\le x$, such that $M_1\le y$ while $2M_i\le y$ for $2\le i\le k$. The ordering of the $m_i$ also implies that $M_1\ge M_2\ge \cdots \ge M_k$.

By the inversion formula~\eqref{inversion formula}, we can eliminate the variable $m_1$ in the denominator in equation~\eqref{r1}:
\begin{equation*}
\e\Big ( {-}\frac{hb_1\overline{a_1\delta m_2\cdots m_k}}{m_1}\Big )=
\e\Big ( \frac{hb_1\overline{m_1}}{a_1\delta m_2\cdots m_k}\Big )\e \Big (\frac{hb_1}{a_1\delta m_1\ldots m_k}\Big ),
\end{equation*}
The second exponential term above is $1+O(1/\delta M_1\cdots M_k) $ and can be replaced by $1$ with an admissible error.
Thus we have, for some $\lambda$ depending on $m_2,\ldots ,m_k,\delta, r, f$ but not on~$m_1$,
\begin{multline*}
S(M_1,\ldots,M_k) \\
=\sum_{\substack{{M_i<m_i\le 2M_i} \, (2\le i\le k) \\ (m_2\cdots m_k,Ah)=1}}
\sum_{\substack{{M_1<m_1\le 2M_1} \\ {\max \{m_2,\cdots ,m_k\}\le m_1\le x/(m_1\cdots m_2)}\\ {(m_1,Ahm_2\cdots m_k)=1}}}\e\Big (\frac{\lambda \overline{m_1}}{a_1\delta m_2\cdots m_k}\Big ) +E,
\end{multline*}
where $E$ is a sufficiently small error term.
Next we separate the squarefree and the squarefull parts of this denominator.
With  the same notation as in the proof of Theorem~\ref{1+1+1}, we obtain a formula of the shape
\begin{equation*}
\e\Big (\frac{\lambda \overline{m_1}}{a_1m_2\cdots m_k}\Big ) =
\e\Big (\frac{\alpha \overline{m_1}}{(a_1m_2\cdots m_k)^\flat}+\frac{R(n)}{\delta (a_1m_2\cdots m_k)^\sharp}
\Big ).
\end{equation*}
Writing $q^\sharp = (a_1m_2\cdots m_k)^\sharp$, we find that Lemma~\ref{AkB} gives the estimate
\begin{multline*}
S(M_1,\ldots ,M_k)\ll x\sum_{j=3}^k(M_j/M_1)^{1/2^{j-2}}\\
+x(\delta q^\sharp)^{1/2^k}(\log x)^{2^k+3}\big( M_2^{-1/2^{k}} +(\log x)^{1/2^k}M_2^{1/2^{k+1}}M_1^{-1/2^k} \big).
\end{multline*}
Since $M_k \le \cdots \le M_3\le M_1(\log x)^{-k4^k}$, this estimate saves enough powers of $\log x$ to compensate for the number of subsums and the summation over~$\delta$; we obtain a bound for $T_3 ^{(1)}(r ,\delta)$ that is admissible for Theorem~\ref{k-linear}.

It remains to handle $T_4^{(1)}(r ,\delta)$, where a direct application of Lemma~\ref{AkB} is not
sufficient. We will proceed as in the sum $S_4 (x)$ in Section~\ref{1+1+1S4}. 
Recalling that $m_1\ge m_2\ge\cdots\ge m_k$, we let $\ell\in\{ 2,\ldots ,k\}$ be the largest index such that $m_\ell> m_1(\log x)^{-k4^k}$.
For $2\le i\le \ell$, we factor $m_i =c_id_i$ with 
$P^+(c_i)\le z <P^-(d_i)$ where $z=\exp (\log x/(800k\log\log x))$. 
We also introduce the parameters $w=x^{1/(50k)}$ and $v=(\log x)^{10k4^k}$.

Let $T_6^{(1)}(r ,\delta)$ be the analogue of the sum $S_6(x)$ in Section~\ref{1+1+1S4}, which is the contribution of the $(m_1,\ldots ,m_k)$ such that $c_i>w$ for some $i\in\{ 2,\ldots ,\ell\}$. With a computation similar to the one in equation~\eqref{S6}, we find:
\begin{equation*}
T_6^{(1)}(r ,\delta)\ll \frac{x(\log x)^k}{w^{1/4}}+x2^{-{\log w}/(2\log z)}(\log x)^k \ll x{(\log x)^{k-8\log2}},
\end{equation*}
where the final estimate is sufficient since $8\log2>5$.

We next examine $T_7^{(1)}(r ,\delta)$, the analogue of $S_7(x)$ in Section~\ref{1+1+1S4}, corresponding to the case where there is at most one $i\in \{ 2,\ldots,\ell\}$ with $c_i\le v$.
Without lost of generality we can suppose that $c_2\le v$ and $c_i\in [v,w ]$ for all $3\le i\le \ell$.
We introduce some dyadic splittings, similar to those in our treatment of $T^{(1)}_3(r,\delta)$, of $T_7^{(1)}(r ,\delta)$ into $O((\log x)^k)$ sums
$S_7(M_1,\ldots ,M_k)$ with  $M_i<c_id_i\le 2M_i$ for $2\le i\le \ell$ and $M_i<m_i\le 2M_i$
for $i\in\{1\}\cup\{\ell+1,\ldots ,k\}$.
We apply Lemma~\ref{AkB} with $k$ replaced by $k+\ell-4$ and with the parameters
\begin{align*}
N&=2M_1,\\
q_0&=c_2d_2, \\
q_1&=d_3, \, q_2=d_4,\, \ldots ,\, q_{\ell -2}=d_\ell, \\
q_{\ell -1}&=m_{\ell +1}, \, q_\ell = m_{\ell+2},\, \ldots ,\, q_{k-2}=m_k, \\
q_{k-1}&=c_3,\, q_k=c_4,\, \dots,\, q_{k+\ell-4}=c_\ell.
\end{align*}
With the notations $q^\sharp=(a_1m_2\cdots m_k)^\sharp$ and $k_1=k+\ell -4$, Lemma 12 
gives
\begin{equation*}
\begin{split}
S_7&(M_1,\ldots ,M_k) \\
&=\mathop{\sum\nolimits^{*\!\!}}_{m_2,\ldots,m_k} \bigg [\sum_{j=1}^{\ell -2}M_1^{1-1/2^j}
d_{j+2}^{1/2^j}+\sum_{j=\ell -1}^{k-2}M_1^{1-1/2^j}m_{j+2}^{1/2^j}+
\sum_{j=k-1}^{k+\ell -4}M_1^{1-1/2^j}c_{j-k+4}^{1/2^j}\\
&\qquad{}+M_1^{1-1/2^{k_1}}M_2^{1/2^{k_1+1}}(\delta q^\sharp)^{1/2^{k_1}}(2^{k_1}+4)
^{\omega (m_2)/2^{k_1}}\bigg (\Big (\frac{M_1}{M_2}\Big )^{1/2^{k_1}}+(\log x)^{1/2^{k_1}} \bigg)
\bigg ],\\
\end{split}
\end{equation*}
where the asterisk on the sum indicates that the $m_j$ and the $c_j,d_j$ satisfy the previously indicated conditions. At this point, computations analogous to those at the end of Section~\ref{1+1+1S4} give a bound for $T_7^{(1)}(r,\delta)$ that is sufficiently small.

Finally, we examine $T_5^{(1)}(r ,\delta)$, the analogue of $S_5(x)$ in Section~\ref{1+1+1S4}, corresponding to the case where $c_i\le v$ for at least two indices $i\in\{ 2,\ldots ,\ell\}$. The estimation of this part of the sum is where we fail to obtain an asymptotic formula for $S(f,x,h)$ when $k\ge 5$.

Without loss of generality we can suppose that $c_2\le v$ and
$c_3\le v$. If we define $I(m_1)=[m_1/(\log x)^{k4^k} , m_1]$, then
\begin{equation*}
T_5^{(1)}(r ,\delta )\ll \sum_{m_1\le y}\sum_{m_5,\ldots ,m_k\le m_1}\sum_{c_2,c_3\le v}
\sum_{\substack{{c_2d_2\in I(m_1)}\\ {c_3d_3\in I(m_1)}}}\sum_{
\substack{{m_4\le x/(\delta m_1c_2d_2c_3d_3m_5\cdots m_k)}\\ {m_4\le m_1}}}1.
\end{equation*}
(In this line, the sum over $m_5,\ldots , m_k$ does not occur if $k=3$ or $k=4$; in fact, the case $k=3$ is handled exactly in the same way as in the estimation of $S_5(x)$ in Section~\ref{1+1+1S4}.)
We use equation~\eqref{invSifted} for the sum over the sifted variables $d_2$, $d_3$, gaining an additional factor of $(\log z)^2$ in the estimate:
\begin{equation}\label{T5}
T_5^{(1)}(r ,\delta)\ll \frac{x(\log\log x)^2(\log x)^{k-3}}{\delta(\log z)^2}\ll x\delta^{-1} (\log\log x)^4(\log x)^{k-5}.
\end{equation}
Since the number of roots $r$ is bounded uniformly in $\delta$ and $\sum_{\delta \mid (Ah)^\infty}\delta^{-1} \ll1$, we see that the bound~\eqref{T5} is sufficiently small. This completes the proof of Theorem~\ref{k-linear}.



\section*{Acknowledgments} 
The authors are very grateful to Zhizhong Huang for all his remarks on a preliminary version of this manuscript. We also express our deep gratitude to the anonymous referee, both for a close and detailed reading of the manuscript which substantially improved the final version, and for suggesting Theorem~\ref{k-linear} and providing the outline of the proof. The second author was supported in part by a Natural Sciences and Engineering Research Council of Canada Discovery Grant.


\bibliographystyle{amsplain}



\begin{dajauthors}
\begin{authorinfo}[cecile] 
  C\'ecile Dartyge\\
  Institut \'Elie Cartan de Lorraine\\
  BP 70239, 54506 Vand\oe uvre-l\`es-Nancy Cedex, France\\
  cecile.dartyge\imageat{}univ-lorraine\imagedot{}fr \\
  \url{https://www.iecl.univ-lorraine.fr/~Cecile.Dartyge/} 
\end{authorinfo}
\begin{authorinfo}[greg]
  Greg Martin\\
  Department of Mathematics\\
  University of British Columbia\\
  Room 121, 1984 Mathematics Road\\
  Vancouver, BC, Canada \ V6T 1Z2\\
  gerg\imageat{}math\imagedot{}ubc\imagedot{}ca \\
  \url{http://www.math.ubc.ca/~gerg/}
\end{authorinfo}
\end{dajauthors}

\end{document}